\newcommand{\STATE}{\State}
\newcommand{\FOR}{\For}
\newcommand{\ENDFOR}{\EndFor}
\def\ps@pprintTitle{%
  \let\@oddhead\@empty
  \let\@evenhead\@empty
  \let\@oddfoot\@empty
  \let\@evenfoot\@oddfoot
}
\newcommand\reallywidehat[1]{%
	\savestack{\tmpbox}{\stretchto{%
			\scaleto{%
				\scalerel*[\widthof{\ensuremath{#1}}]{\kern.1pt\mathchar"0362\kern.1pt}%
				{\rule{0ex}{\textheight}}
			}{\textheight}%
		}{2.4ex}}%
	\stackon[-6.9pt]{#1}{\tmpbox}%
}
\begin{document}

\def \R {\mathbb{R}}
\def \a {\alpha}
\def \H {\mathcal{H}}
\def \b {\beta}
\def \N {\mathbb{N}}
\def \D {\mathcal{D}}

\begin{frontmatter}
  \title{Solving high-dimensional nonlinear filtering problems using a tensor train decomposition method\\
  \vspace{10pt}	
  \small Dedicated to Professor Thomas Kailath on the occasion of his 85th birthday}
   
   \author[hku]{Sijing Li}
   \ead{lsj17@hku.hk}
   \author[hku]{Zhongjian Wang}
   \ead{ariswang@connect.hku.hk}
   \author[Tsinghua]{Stephen S.T. Yau\corref{cor1}}
   \ead{yau@tsinghua.edu.cn}
   \author[hku]{Zhiwen Zhang\corref{cor1}}
   \ead{zhangzw@hku.hk}
   \address[hku]{Department of Mathematics, The University of Hong Kong, Pokfulam Road, Hong Kong SAR, China.}
   \address[Tsinghua]{Department of Mathematics, Tsinghua University, Beijing 100084, China.}
 
  \cortext[cor1]{Corresponding author}

\begin{abstract}
In this paper, we propose an efficient numerical method to solve high-dimensional nonlinear filtering (NLF) problems. Specifically, we use the tensor train decomposition method to solve the forward Kolmogorov equation (FKE) arising from the NLF problem. Our method consists of offline and online stages. In the offline stage, we use the finite difference method to discretize the partial differential operators involved in the FKE and extract low-dimensional structures in the solution space using the tensor train decomposition method. In addition, we approximate the evolution of the FKE operator using the tensor train decomposition method.  In the online stage using the pre-computed low-rank approximation tensors, we can quickly solve the FKE given new observation data. Therefore, we can solve the NLF problem in a real-time manner. Under some mild assumptions, we provide convergence analysis for the proposed method. Finally, we present numerical results to show the efficiency and accuracy of the proposed method in solving high-dimensional NLF problems. \\
\noindent \textit{\textbf{AMS subject classification:}}  15A69, 35R60, 65M12, 60G35,  65M99.  

\end{abstract}
\begin{keyword}
nonlinear filtering (NLF) problems; forward Kolmogorov equations (FKEs); Duncan-Mortensen-Zakai (DMZ) equation; tensor train decomposition method;  convergence analysis; real-time algorithm.
\end{keyword}
\end{frontmatter}


\section{Introduction} \label{sec:introduction}
\noindent
Nonlinear filtering (NLF) problem is originated from the problem of  tracking and signal processing. The fundamental problem in the NLF is to give the instantaneous and accurate estimation of the states based on the noisy observations \cite{kallianpur:2013}. In this paper, we consider the signal based nonlinear filtering problems as follows,
\begin{align}\label{NonlinearSignalModelIntroSection}
\begin{cases}
d\textbf{x}_{t} = \textbf{f}(\textbf{x}_t,t)dt + \textbf{g}(\textbf{x}_t,t)d\textbf{v}_t, \\
d\textbf{y}_{t} = \textbf{h}(\textbf{x}_t,t)dt +         d\textbf{w}_t,
\end{cases} 
\end{align}
where $\textbf{x}_t\in R^d$ is the vector of states at time $t$, $\textbf{y}_t\in R^m$ is the vector of observations at time $t$, $\textbf{f}(\textbf{x}_t,t)=(f_1(\textbf{x}_t,t),...,f_d(\textbf{x}_t,t))^{T}$, $\textbf{g}(\textbf{x}_t,t)=(g_1(\textbf{x}_t,t),...,g_d(\textbf{x}_t,t))^{T}$, 
$\textbf{h}(\textbf{x}_t,t)=(h_1(\textbf{x}_t,t),...,h_m(\textbf{x}_t,t))^{T}$ are the drift, diffusion, and observation vector functions respectively, and $\textbf{v}_t$, $\textbf{w}_t$ are vector Brownian motion processes. Some growth conditions on $\textbf{f}$, $\textbf{g}$ and $\textbf{h}$ are required to guarantee the existence and well-posedness of the NLF problems, which will be discussed later. 

Particle filter method is the most popular method to solve \eqref{NonlinearSignalModelIntroSection}; see e.g. \cite{Platen:1992,kurtz1999particle,Arulampalam:2002,Gustafsson:2002,Bain:2009} and references therein. However, the main drawback of the particle filter method is that it is hard to be implemented in a real-time manner due to its nature of the Monte Carlo simulation. In practice, the real-time manner means the running time of the numerical 
integrator in solving the state equation for $\textbf{x}_t$ is much less than the time between any two observations of $\textbf{y}_{t}$.

Alternatively, one can solve the Duncan-Mortensen-Zakai (DMZ) equation, also known as Zakai equation, to study the NLF problems \cite{Duncan:1967,Mortensen:1966,Zakai:1969}. The DMZ equation computed the unnormalized conditional density function of the states $\textbf{x}_{t}$, which provides a powerful tool to study the NLF problem since one can estimate the statistical quantities of the state $\textbf{x}_{t}$ based on the DMZ solution. In general, one cannot solve the DMZ equation analytically. Many efforts have been made to develop efficient numerical methods; see e.g.\cite{kushner1967,Bensoussan:1990,Nagase:1995,ItoZakai:1996,RozovskiiLototsky:1997,zhang1998nonlinear,Gyongy:2003} 
and the references therein. 
 
The DMZ equation allows one to study the statistical quantities of the states $\textbf{x}_{t}$. In practice, however, one can only get one realization of the states $\textbf{x}_{t}$ (instead of thousands of repeated experiments), which motivates researchers to develop robust methods in solving the DMZ equation. Namely, the robust method should not be sensitive to the given observation paths. A novel algorithm was proposed to solve the path-wise robust DMZ equation \cite{YauYau:2008}.  In this approach, for each realization of the observation process denoted by $\textbf{y}_t$, one can make an invertible exponential transformation and transform the DMZ equation into a deterministic partial differential equation (PDE) with stochastic coefficient. Several efficient numerical methods were developed along this direction;
see \cite{LuoYauCompleteRealTime:2013,LuoYau:2013,WangLuoYauZhang:2018}, which can be efficient when the dimension of the NLF problems is small. However, it becomes expensive as the dimension of the NLF problems increases. Therefore, it is still very challenging to solve high-dimensional NLF problems in a real-time manner.
  
In this paper, we propose to use the tensor train decomposition method to solve the high-dimensional FKEs. Our method consists of offline and online stages. In the offline stage, we use the finite difference method to discretize the partial differential operators involved in the FKE and extract low-dimensional structures in the solution space using the tensor train decomposition method. Moreover, we approximate the evolution of the FKE operator using the tensor train method.  In the online stage, we can quickly solve the FKE given new observation data using the pre-computed low-rank approximation tensors. By exploring the low-dimensional structures in the solution space of the FKE, we can solve the NLF problem in a real-time manner. We also provide convergence analysis for the proposed method. Finally, we present numerical results to show the efficiency and accuracy of the proposed method. We find that the tensor train method is scalable in solving the FKE. Thus, we can solve the NLF problem in a real-time manner.  

The rest of the paper is organized as follows. In Section 2, we give a brief introduction of the NLF problem
and DMZ  equation. In Section 3, the basic idea of the tensor train decomposition method is introduced. 
In Section 4, we propose our fast method to compute the high-dimensional FKEs. 
Some convergence analysis of the proposed method will be provided in Section 5. In Section 6, we present numerical results to demonstrate the accuracy and efficiency of our method. Concluding remarks are made in Section 7.

\section{Some basic results of the NLF problems and DMZ equation}
\noindent
To start with, we consider the signal based model as follows,
\begin{align}\label{NonlinearSignalModel}
\begin{cases}
d\textbf{x}_{t} = \textbf{f}(\textbf{x}_t,t)dt + \textbf{g}(\textbf{x}_t,t)d\textbf{v}_t, \\
d\textbf{y}_{t} = \textbf{h}(\textbf{x}_t,t)dt +         d\textbf{w}_t,
\end{cases}
\end{align}
where $\textbf{x}_t\in R^d$ is a vector of the states of the system at time $t$, the initial state $\textbf{x}_0$ is given, $\textbf{y}_t\in R^m$ is a vector of the observations at time $t$ with initial state $\textbf{y}_0$, and $\textbf{v}_t$ and $\textbf{w}_t$ are vector Brownian motion processes with covariance matrices $E[d\textbf{v}_td\textbf{v}_t^{T}]=\textbf{Q}(t)dt\in R^{d\times d}$ and $E[d\textbf{w}_td\textbf{w}_t^{T}]=\textbf{S}(t)dt\in R^{m\times m}$, respectively. Moreover, $\textbf{x}_0$, $d\textbf{w}_t$ and $d\textbf{v}_t$ are assumed to be independent. Some growth conditions on $\textbf{f}$ and $\textbf{h}$ are required to guarantee the existence and uniqueness of the pathwise-robust DMZ equation \cite{YauYau:2008}. In this paper, $\textbf{f}$, $\textbf{h}$, and $\textbf{g}$ are $C^2$ in spatial variable and $C^1$ in the temporal variable.


The DMZ equation or Zakai equation \cite{Duncan:1967,Mortensen:1966,Zakai:1969} asserts that the unnormalized conditional density function of the states $\textbf{x}_{t}$, denoted by $\sigma(\textbf{x},t)$,  satisfies the following stochastic partial differential equation (SPDE):
\begin{align}\label{DMZequation}
\begin{cases}
d\sigma(\textbf{x},t) = \mathcal{L}\sigma(\textbf{x},t)dt + \sigma(\textbf{x},t)\textbf{h}^{T}(\textbf{x},t)\textbf{S}^{-1}d\textbf{y}_t, \\
\sigma(\textbf{x},0) = \sigma_0(\textbf{x}),
\end{cases}
\end{align}
where $\sigma_0(\textbf{x})$ is the density of the initial states $\textbf{x}_0$, and
\begin{align}\label{L_operator}
\mathcal{L}(\cdot):=\frac{1}{2}\sum_{i,j=1}^{d}\frac{\partial^2}{\partial x_i\partial x_j}
\big( (gQg^T)_{ij}\cdot\big) - \sum_{i=1}^{d}\frac{\partial (f_i \cdot)}{\partial x_i}.
\end{align}
The DMZ equation laid down a solid foundation to study the NLF problem. However, one cannot solve the DMZ equation analytically in general. Many efforts have been made to develop efficient numerical methods. One of the commonly used method is the splitting-up method originated from the Trotter product formula, which was first introduced in \cite{Bensoussan:1990} and has been extensively studied later, see \cite{Nagase:1995,ItoZakai:1996,Gyongy:2003}. In \cite{RozovskiiLototsky:1997}, the so-called $S^3$ algorithm was developed based on the Wiener chaos expansion. By separating the computations involving the observations from those dealing only with the system parameters, this approach gives rise to a new numerical scheme for NLF problems. However, the limitation of their method is that the drifting term $f$ and the observation term $h$ in \eqref{NonlinearSignalModel} should be bounded.
 
To overcome this restriction, Yau and Yau \cite{YauYau:2008} developed a novel algorithm to solve the path-wise robust DMZ equation. Specifically, for each realization of the observation process denoted by $y_t$, they make an invertible exponential transformation
\begin{align}\label{InvertibleTransformation}
\sigma(\textbf{x},t) = \exp\big( \textbf{h}^{T}(\textbf{x},t)\textbf{S}^{-1}(t)\textbf{y}_t \big)u(\textbf{x},t),
\end{align}
and transform the DMZ equation \eqref{DMZequation} into a deterministic partial differential equation (PDE) with stochastic coefficient
\begin{equation}\label{Intro_PathwiseRobustDMZ}
\left\{\begin{aligned}
\frac{\partial }{\partial t}u(\textbf{x},t)&+ \frac{\partial }{\partial t} (\textbf{h}^T\textbf{S}^{-1})\textbf{y}_t u(\textbf{x},t) = \\
&\exp\big(-\textbf{h}^{T}(\textbf{x},t)\textbf{S}^{-1}(t)\textbf{y}_t \big)\big(\mathcal{L}-\frac{1}{2}\textbf{h}^T\textbf{S}^{-1}\textbf{h}\big)\big(\exp\big(-\textbf{h}^{T}(\textbf{x},t)\textbf{S}^{-1}(t)\textbf{y}_t \big) u(\textbf{x},t)\big),\\
u(\textbf{x},0) = &\sigma_0(\textbf{x}).
\end{aligned}\right.
\end{equation}
Equation \eqref{Intro_PathwiseRobustDMZ} is called the pathwise robust DMZ equation \cite{RozovskiiLototsky:1997,YauYau:2008}.  Compared with the DMZ equation \eqref{DMZequation}, 
the pathwise robust DMZ equation \eqref{Intro_PathwiseRobustDMZ} is easier to solve, since the stochastic term has been transformed into the coefficients. 

The existence and uniqueness of \eqref{Intro_PathwiseRobustDMZ} has been investigated by many researchers. The well-posedness is guaranteed when the drift term $\textbf{f}\in C^1$ and the observation term $\textbf{h}\in C^2$ are bounded in \cite{Pardoux:1980}. Later on, similar results can be obtained under weaker conditions. For instance, the well-posedness results on the pathwise-robust DMZ equation with a class of unbounded coefficients were obtained in \cite{BarasZakai:1983,fleming:1982}, but the results were for one-dimensional case. In \cite{YauYau:2008}, the third author of this paper and his collaborator established the well-posedness result under the condition that $\textbf{f}$ and $\textbf{h}$ have at most linear growth.  In \cite{LuoYauCompleteRealTime:2013}, a well-posedness result was obtained for time-dependent pathwise-robust DMZ equation under some mild growth conditions on $\textbf{f}$ and $\textbf{h}$.

Let us assume that the  observation time sequences $0=t_0 < t_1 < \cdot\cdot\cdot < t_{N_t} = T$ are given. In each time interval $t_{j-1}\leq t < t_{j}$, one freezes the stochastic coefficient $\textbf{y}_{t}$ to be $\textbf{y}_{t_{j-1}}$ in Eq.\eqref{Intro_PathwiseRobustDMZ} and makes the exponential transformation
\begin{align}\label{PathwiseRobustDMZ_expotransf}
\widetilde{u}_j(\textbf{x},t) = \exp\big( \textbf{h}^{T}(\textbf{x},t)\textbf{S}^{-1}(t)\textbf{y}_{t_{j-1}} \big)u(\textbf{x},t).
\end{align}
It is easy to derive that $\widetilde{u}_j$ satisfies the FKE
\begin{align}\label{PathwiseRobustDMZ_KFE}
\frac{\partial }{\partial t}\widetilde{u}_{j}(\textbf{x},t) = \big(\mathcal{L}-\frac{1}{2}\textbf{h}^T\textbf{S}^{-1}\textbf{h}\big) \widetilde{u}_{j}(\textbf{x},t),
\end{align}
where the operator $\mathcal{L}$ is defined in \eqref{L_operator}.
In \cite{LuoYau:2013}, Luo and Yau investigated the Hermite spectral method to numerically solve the 1D FKE \eqref{PathwiseRobustDMZ_KFE} and analyzed the convergence rate of the proposed method. In their algorithm, the main idea is to shift part of the heavy computations off-line, so that only computations involved observations are performed on-line and synchronized with off-line data. The numerical method 
based on the Hermite polynomial approximation is efficient though, it is extremely hard to extend to solve high-dimensional FKEs in the real-time manner, since the number of the Hermite polynomial basis functions grows fast for high-dimensional problems. Namely, it suffers from the curse of dimensionality.

In a very recent result \cite{WangLuoYauZhang:2018}, we proposed to use the proper orthogonal decomposition (POD) method to numerically solve the 2D FKE. By extracting the low-dimensional structures in the solution space of the FKE and building POD basis, our method provides considerable savings over the Hermite polynomial approximation method that was used in \cite{LuoYau:2013}. The POD method helps us alleviate the curse of dimensionality to a certain extent though, it is still very challenging to solve high-dimensional NFL problems. The reason is that in the POD method one needs to compute solution snapshots to construct POD basis. However, to compute solution snapshots for high-dimensional NFL problems is extremely expensive. We shall address this challenge by using the tensor train  (TT)  decomposition method in this paper.

\section{The Tensor Train decomposition method}\label{sec:TTmethod}
\noindent
We shall introduce the tensor train decomposition method for approximating solutions of high-dimensional NLF problems. Let us assume the dimension of the NLF problem is $d$. 
For any fixed time $t$, if we discretize the solution $\tilde{u}_j(\textbf{x},t)$, $\textbf{x}\in R^d$ of the FKE \eqref{PathwiseRobustDMZ_KFE} using conventional numerical methods, such as finite difference method,  we obtain a $d$-dimensional $n_1\times n_2\times\cdots \times n_d$ tensor $\textbf{U}(i_1,i_2,\cdots,i_d)$, which is a multidimensional array. The number of unknowns in this representation grows fast as $d$ increases and is subject to the curse of dimensionality. To attack this challenge, one should extract potential low-dimensional structures in the tensor and approximate the tensor in a certain data-sparse way.    

The TT decomposition method is an efficient method for tensor approximation \cite{Oseledets:2011}. A brief introduction of the TT-format is given below. If a $d$-dimensional $n_1\times n_2\times\cdots \times n_d$ tensor $\textbf{U}(i_1,i_2,\cdots,i_d)$ can be written as the element-wise form 
\begin{align}\label{TTformat}
\textbf{U}(i_1,i_2,\cdots,i_d) = \textbf{G}_1(i_1)\textbf{G}_2(i_2)\cdots \textbf{G}_d(i_d), \quad 1\leq i_k \leq n_k, 
\end{align}
where $\textbf{G}_k(i_k)$ is a $r_{k-1}\times r_k$ matrix for each fixed $i_k$, $1\leq k\leq d$ 
and $r_0=r_{d}=1$. We call the tensor $\textbf{U}$ is in the TT-format, if it is represented in the form \eqref{TTformat}. Furthermore, each element $\textbf{G}_k$ can be regarded as an 3-dimensional tensor of the size $r_{k-1}\times n_k\times r_k$. In the representation \eqref{TTformat}, $\textbf{G}_1,\textbf{G}_2,\cdots,\textbf{G}_d$ are called the cores of the TT-format tensor $\textbf{U}$, numbers $r_k$ are called TT-ranks, and numbers $n_1,n_2,\cdots,n_d$ are called mode sizes. With these definitions, the representation \eqref{TTformat} can be rewritten as  
\begin{align}
\textbf{U}(i_1,i_2,\cdots,i_d) = \sum_{\alpha_1,\cdots,\alpha_{d-1}}\textbf{G}_1(\alpha_0,i_1,\alpha_1)\textbf{G}_2(\alpha_1,i_2,\alpha_2)\cdots \textbf{G}_d(\alpha_{d-1},i_d,\alpha_d)
\end{align}
where $\alpha_0=\alpha_d=1, 1\leq\alpha_k\leq r_k$ for $1\leq k\leq d-1$. 
In practice, one only needs to store all the cores $\textbf{G}_k$ in the TT-format, in order to save a tensor.  Thus, if all the TT-ranks $r_k$ are bounded by a constant $r$ and the mode sizes $n_k$ are bounded by $N$, the storage of the $d$-dimensional tensor $\textbf{U}$ is $\mathcal{O}(dNr^2)$ in the TT-format. Recall that the storage of the tensor $\textbf{U}$ is about $\mathcal{O}(N^d)$, if no approximation is used.

To further reduce the storage of the TT-format, a quantized tensor train (QTT) format was introduced in \cite{Oseledets:2010,QTTofFunction:2011,QTTLaplace:2012,FokkerPlanck:2012}. The QTT format is derived by introducing virtual dimensions along each real dimension of a tensor. Specifically, suppose each one-dimensional size of the tensor $\textbf{U}$ is a power of 2, i.e. $n_1=n_2=\cdots=n_d=2^L$. The $d$-dimensional tensor $\textbf{U}$ can be reshaped to a $D$-dimensional tensor with $D=dL$, while each mode size is equal to 2. The QTT-format is the TT-format of the reshaped tensor, which has a larger number of dimension but much smaller mode sizes (here is 2) than the TT-format. The concepts of cores, QTT-ranks and mode sizes (all are equal to 2) of QTT-format are defined similarly as the TT-format. The storage of the QTT-format is further reduced to $\mathcal{O}(d\log_{2}(N)r^2)$.

One can also simply formulate the TT-format of $d$-dimensional matrices \cite{QTTLaplace:2012}\cite{Oseledets:2010}. The TT-format of a $d$-dimensional $(m_1\times\cdots\times m_d)\times(n_1\times\cdots\times n_d)$ matrix $\textbf{B}$ can be written as 
\begin{align}
\textbf{B}(i_1,\cdots,i_d,j_1,\cdots,j_d) = \sum_{\alpha_1=1}^{r_1} \cdots \sum_{\alpha_{d-1}=1}^{r_{d-1}}\textbf{G}_1(1,i_1,j_1,\alpha_1)\textbf{G}_2(\alpha_1,i_2,j_2,\alpha_2)\cdots \textbf{G}_d(\alpha_{d-1},i_d,j_d,1),
\end{align}
where the index 1 in $\textbf{G}_1$ and $\textbf{G}_d$ is due to the TT-ranks $\alpha_0=\alpha_d=1$. The definitions of cores, ranks and mode sizes are similar as tensors. The QTT representation of matrices will be used in this paper.

Simple calculations show that in the TT-format the computational complexity of 
addition (together with TT-rounding after addition) is $\mathcal{O}(dNr^3)$, matrix-by-vector multiplication is $\mathcal{O}(dN^2r^4)$, and Hadamard product is $\mathcal{O}(dNr^4)$. Therefore, the TT/QTT-format allows lower complexity of algebraic operation than dense matrix or tensor form. In practice, especially in solving time-evolution problems, one needs to apply the TT-rounding procedure in the computation. The computational complexity of the TT-rounding is $\mathcal{O}(dNr^3)$. The purpose of TT-rounding is to decrease TT/QTT-rank of a matrix or tensor already in the TT/QTT-format while preserving a given accuracy $\epsilon$. The QTT-format can further reduce the factor $N$ of complexity to $\log_{2}(N)$.

\section{The fast algorithm and its implementation}\label{sec:fastalg}
\noindent  
In this section, we present the fast algorithm to solve the NLF problem. Let us assume that the observation time sequences $0=t_0 < t_1 < \cdot\cdot\cdot < t_{N_t} = T$ are given. But the observation data $\{\textbf{y}_{t_j}\}$ at each observation time $t_j$, $j=0,...,N_t$ are unknown until the on-line experiment runs. For simplicity, we assume $t_{j}-t_{j-1}=\Delta T$. We shall study how to solve the FKE \eqref{PathwiseRobustDMZ_KFE} within each time interval $[t_{j-1},t_{j}]$ and compute the exponential transformation \eqref{PathwiseRobustDMZ_expotransf} using the QTT-format. If we use an explicit scheme to discretize the time derivative in the FKE \eqref{PathwiseRobustDMZ_KFE}, we get an  semi-discrete scheme as follows, 
\begin{align}\label{timestepping_FKE}
\widetilde{u}_{j}^n(\textbf{x}) = \Big\{\tau (\mathcal{L}-\frac{1}{2}\textbf{h}^T\textbf{S}^{-1}\textbf{h}) + I\Big\}\widetilde{u}_{j}^{n-1}(\textbf{x}), ~~n = 1,\cdots,\frac{\Delta T}{\tau},
\end{align}
where $\tau$ is the time step in discretizing the FKE \eqref{PathwiseRobustDMZ_KFE} and $\Delta T$ is the time interval between two sequential observations. 

Our algorithm consists of an offline procedure and an online procedure. In the offline procedure, we
compute the matrices of spatial discretization of operators in \eqref{timestepping_FKE} and convert them into the QTT-format in advance, which will significantly reduce the computational time in the online procedure.  In the online procedure,  we solve the FKE and update the solution with new observation data. We shall show that the online procedure can be accomplished in a real time manner since all the computations are done in the QTT-format.

\subsection{Spatial discretization and low-rank approximation}\label{SpatialDiscretization}
\noindent
We shall discuss how to discretize the FKE \eqref{PathwiseRobustDMZ_KFE} and represent the spatial discretization in the QTT-format. To simplify the notations and illustrate the main idea of our algorithm, we choose a squared domain and uniform grid. Specifically, we take a large $a>0$ and let $\Omega=[-a,a]^d$ denote the physical domain of the FKE. A  uniform grid is set on each dimension with $N=2^L$ points and mesh size $h=\frac{2a}{(N-1)}$. We use $x_k(l_k)$,  $l_k=1,\cdots,N$, $k=1,...,d$ to record the coordinates of grid points on $k$-th dimension, which can be 
written in a compact form $\textbf{x}_\textbf{l}=(x_{1}(l_1),...,x_{d}(l_d))$ with $\textbf{l}=(l_1,...,l_d)$. 
We also assume $\textbf{S} = \textbf{I}_d$, $\textbf{Q} = \textbf{I}_d$ and $\textbf{g} = \textbf{I}_d$ for simplicity of notation.

First, the Laplace operator in \eqref{PathwiseRobustDMZ_KFE} is discretized by using a finite difference (FD) scheme. The resulting matrix is of the form
\begin{align}\label{ndLaplace}
\Delta_{d} = \Delta_1\otimes\cdots\otimes \textbf{I} +\cdots + \textbf{I}\otimes\cdots\otimes\Delta_1,
\end{align}
where
\[
\Delta_1 = \frac{1}{h^2}\mathrm{tridiag}(1,-2,1),
\]
and $\textbf{I}$ is an $N\times N$ identity matrix. In particular, when $d=3$ the Laplace operator has the form
\begin{align}\label{3dLaplace}
\Delta_{3} = \Delta_1\otimes \textbf{I}\otimes \textbf{I} +\textbf{I}\otimes\Delta_1\otimes \textbf{I} + \textbf{I}\otimes \textbf{I}\otimes\Delta_1.
\end{align}
The QTT-format of matrix $\Delta_d$ has a low-rank representation that is bounded by 4 (see Corollary 5.3 
of \cite{QTTLaplace:2012}). 

Second, the convection operator $\sum_{i=1}^{d}\frac{\partial (f_i \cdot)}{\partial x_i}$ in \eqref{PathwiseRobustDMZ_KFE} (see also Eq.\eqref{L_operator}) is discretized by using a central difference scheme. Thus, the corresponding $d$-dimensional matrix has the form
\begin{align}\label{driftoperator}
\textbf{C}_d = (\textbf{C}\otimes \textbf{I}\otimes\cdots \otimes \textbf{I})\textbf{F}_1 + (\textbf{I}\otimes \textbf{C}\otimes\cdots\otimes \textbf{I})\textbf{F}_2 + \cdots + (\textbf{I}\otimes \textbf{I}\otimes\cdots\otimes \textbf{C})\textbf{F}_d,
\end{align}
where $\textbf{F}_k$'s are diagonal matrices associated with the diagonalization of tensor discretization of the drift functions $f_k$, i.e.,
\[
\textbf{F}_k(l_1,l_2,\cdots,l_d,l_1,l_2,\cdots,l_d) = f_k(x_1(l_1),x_2(l_2),\cdots,x_d(l_d)),
\]
for all $1\leq k\leq d$, and $\textbf{C}$ is an one-dimensional central difference operator,
\[
\textbf{C} = \frac{1}{h}\mathrm{tridiag}(-\frac{1}{2},0,\frac{1}{2}).
\]
Under certain conditions for the drift terms $f_k$, the QTT-format of matrix $\textbf{C}_d$ has a bounded low-rank representation. We summarize the result into the following lemma and the proof can be found in \cite{FokkerPlanck:2012}.
\begin{proposition}
Suppose that the QTT-ranks of the functions $f_k$ on a tensor grid are bounded by $r$. Then, the QTT-rank of the matrix $\textbf{C}_d$ in \eqref{driftoperator} is bounded by $5dr$.
\end{proposition}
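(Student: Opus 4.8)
The plan is to reduce the estimate to three elementary facts of the QTT matrix calculus and then to account for the $d$ summands in \eqref{driftoperator}. First I would record the three standard rules (all available from the TT/QTT algebra, e.g.\ \cite{Oseledets:2011}): (i) the rank of a sum is subadditive, $\mathrm{rank}(\textbf{A}+\textbf{B})\le \mathrm{rank}(\textbf{A})+\mathrm{rank}(\textbf{B})$; (ii) the rank of a matrix product is submultiplicative, $\mathrm{rank}(\textbf{A}\textbf{B})\le \mathrm{rank}(\textbf{A})\,\mathrm{rank}(\textbf{B})$; and (iii) diagonalization does not increase rank, i.e.\ if the grid function $f_k$ has QTT-rank $\le r$ then $\textbf{F}_k=\mathrm{diag}(f_k)$ has QTT-rank $\le r$. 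Rule (iii) is the only one I would verify by hand: from a vector core $\textbf{G}_k(i_k)$ one builds a diagonal matrix core $\widehat{\textbf{G}}_k(i_k,j_k)=\delta_{i_k j_k}\textbf{G}_k(i_k)$ of the same size, so the matrix ranks coincide with the vector ranks and no enlargement occurs.

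With these rules in place I would write $\textbf{C}_d=\sum_{k=1}^{d} \textbf{D}_k\textbf{F}_k$, where $\textbf{D}_k=\textbf{I}\otimes\cdots\otimes\textbf{C}\otimes\cdots\otimes\textbf{I}$ is the one-directional central-difference operator acting in coordinate $k$. The next step is to bound the QTT-rank of the single-coordinate operator $\textbf{D}_k$. Since every Kronecker factor except the $k$-th is the identity (QTT-rank $1$), the operator $\textbf{D}_k$ inherits the QTT-rank of $\textbf{C}$ alone, independently of the position $k$, of $d$, and of the mesh level $L$. Writing $\textbf{C}=\frac{1}{2h}\bigl(S-S^{T}\bigr)$ as a difference of the elementary sub-/super-diagonal shift matrices, each of which is known to have QTT-rank $2$, gives a small constant bound on $\mathrm{rank}(\textbf{D}_k)$. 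Combining this with (ii) and (iii) bounds each summand by $\mathrm{rank}(\textbf{D}_k\textbf{F}_k)\le \mathrm{rank}(\textbf{D}_k)\cdot r$, and then (i) yields $\mathrm{rank}(\textbf{C}_d)\le \sum_{k=1}^{d}\mathrm{rank}(\textbf{D}_k)\cdot r$, which is linear in both $d$ and $r$.

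The only genuinely delicate point — and the step I expect to be the main obstacle — is tracking the constant so that the final bound is exactly $5dr$ rather than a looser multiple of $dr$. A naive count treating $S$ and $S^{T}$ independently, and then invoking the generic product bound (ii) term by term, will in general over- or under-shoot, because the shift carries information across neighbouring virtual ($2$-mode) levels of the QTT decomposition, and this carry interacts nontrivially with the diagonal cores of $\textbf{F}_k$ when the product $\textbf{D}_k\textbf{F}_k$ is contracted. I would therefore fix an explicit QTT representation of $\textbf{C}$ at the level of its individual cores, contract it against the diagonal cores of $\textbf{F}_k$, and verify directly that each summand has QTT-rank at most $5r$; summing over the $d$ coordinates via rule (i) then gives the claimed bound $5dr$. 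This explicit core-level bookkeeping is precisely the construction carried out in \cite{FokkerPlanck:2012}, so once the structural reduction above is made I would either reproduce that argument or cite it for the final constant.
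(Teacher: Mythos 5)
Your proposal is correct, but it takes a genuinely different route from the paper, because the paper contains no proof at all: it states the proposition and defers entirely to \cite{FokkerPlanck:2012}. Your argument, by contrast, is self-contained, and the three rules you rest it on --- subadditivity of TT/QTT-ranks under addition, submultiplicativity under matrix-matrix products, and preservation of ranks when a vector is lifted to a diagonal matrix via $\widehat{\textbf{G}}_k(i_k,j_k)=\delta_{i_k j_k}\textbf{G}_k(i_k)$ --- are all valid, as is the observation that $\mathrm{rank}(\textbf{D}_k)\le\mathrm{rank}(\textbf{C})$ because identity Kronecker factors contribute rank one, and the fact that the shift matrices have QTT-rank $2$. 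The one place where you sell your own argument short is the closing paragraph: the fear that the ``naive count'' might over- or under-shoot the constant $5$ is misplaced, because the proposition claims only an upper bound. Rules (i)--(iii) are rigorous inequalities, so they can be loose but never invalid; your count gives $\mathrm{rank}(\textbf{C}_d)\le\sum_{k=1}^{d}\mathrm{rank}(\textbf{D}_k)\,\mathrm{rank}(\textbf{F}_k)\le 4dr\le 5dr$, which already proves the statement with no core-level contraction and no appeal to \cite{FokkerPlanck:2012}. (Indeed one can do better: the block recursion of a tridiagonal Toeplitz matrix $\alpha\textbf{I}+\beta\textbf{J}+\gamma\textbf{J}^{T}$ closes on the three states $\textbf{I},\textbf{J},\textbf{J}^{T}$ across virtual levels, so $\textbf{C}$ has QTT-rank at most $3$ and your argument yields $3dr$; the constant $5$ in the statement and in \cite{FokkerPlanck:2012} is simply generous bookkeeping, e.g.\ $2+1+2$ for a generic tridiagonal Toeplitz factor.) What your approach buys is an elementary, citation-free proof with a sharper constant; what the paper's approach buys is brevity and the explicit core representations of \cite{FokkerPlanck:2012}, which are what one actually implements in code.
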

Although an exact TT-decomposition of any tensor is feasible \cite{Oseledets:2011}, it rarely has a low-rank structure. Therefore, one should apply TT-rounding procedures in order to decrease the TT/QTT-ranks while preserving a given accuracy $\epsilon$. Let us consider the drift terms $f_k$ as an example. In order to construct the QTT-format of functions $f_k$ with low ranks, one can use the TT-SVD algorithm \cite{TTcrossApprox:2010,Oseledets:2011}. QTT-ranks of these QTT-format tensors are guaranteed to be small when an  approximation error $\epsilon$ is prescribed in the sense of Frobenius norm. We list the result of the TT-SVD algorithm as follows.  
\begin{proposition}[Theorem 2.2 of \cite{TTcrossApprox:2010}]\label{lemma_of_TTapproximation}
For any tensor $\textbf{A}$ with size $n_1\times n_2\times\cdots\times n_d$, there exists a tensor $\textbf{B}$ in the TT-format with TT-ranks $r_k$ such that
\[
||\textbf{A} - \textbf{B}||_F \leq \sqrt{\sum_{k=1}^{d-1}\epsilon_k^2},
\]
where $\epsilon_k$ is the distance from $\textbf{A}_k$ to its best rank-$r_k$ approximation in the Frobenius norm, 
\[
\epsilon_k = \min_{\mathrm{rank}\textbf{C}\leqslant r_k}||\textbf{A}_k-\textbf{C}||_F,
\]
and $\textbf{A}_k$ is the $k$-th unfolding matrix of tensor $\textbf{A}$ 
\[
\textbf{A}_k = \mathrm{reshape}\left(\textbf{A},\prod_{s=1}^k n_s,\prod_{s=k+1}^d n_s
\right).
\]
\end{proposition}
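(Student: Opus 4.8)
The plan is to prove the estimate by induction on the order $d$, mirroring the TT-SVD construction of $\textbf{B}$: one peels off a single core at a time by a truncated singular value decomposition (SVD) of successive unfolding matrices, and controls the total error by accumulating the individual truncation errors in a Pythagorean fashion. For the base case $d=2$ the tensor $\textbf{A}=\textbf{A}_1$ is a matrix, $\textbf{B}$ is its truncated rank-$r_1$ SVD, and the Eckart--Young theorem gives $||\textbf{A}-\textbf{B}||_F=\epsilon_1=\sqrt{\epsilon_1^2}$, exactly as claimed.

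For the inductive step I would first compute the truncated SVD of the first unfolding, writing $\textbf{A}_1=\textbf{U}_1\textbf{W}+\textbf{E}_1$ with $\textbf{U}_1$ carrying $r_1$ orthonormal columns, $\textbf{W}=\textbf{U}_1^{T}\textbf{A}_1$, and $||\textbf{E}_1||_F=\epsilon_1$ by Eckart--Young. The first core of $\textbf{B}$ is $\textbf{U}_1$, and the algorithm recurses on the order-$(d-1)$ tensor $\widetilde{\textbf{A}}$ obtained by reshaping $\textbf{W}$ to size $(r_1 n_2)\times n_3\times\cdots\times n_d$, yielding an approximation $\widehat{\textbf{W}}$. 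Since the truncation residual satisfies $\textbf{U}_1^{T}\textbf{E}_1=\textbf{0}$, the two summands in $\textbf{A}-\textbf{B}=\textbf{E}_1+\textbf{U}_1(\textbf{W}-\widehat{\textbf{W}})$ are orthogonal in the Frobenius inner product, and using $||\textbf{U}_1\textbf{M}||_F=||\textbf{M}||_F$ I obtain
\[
||\textbf{A}-\textbf{B}||_F^2=\epsilon_1^2+||\textbf{W}-\widehat{\textbf{W}}||_F^2.
\]
Because reshaping preserves the Frobenius norm, the inductive hypothesis applied to $\widetilde{\textbf{A}}$ bounds the last term by $\sum_{k=2}^{d-1}\hat{\epsilon}_k^2$, where $\hat{\epsilon}_k$ is the best rank-$r_k$ error of the $(k-1)$-th unfolding of $\widetilde{\textbf{A}}$.

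The crux of the argument, and the step I expect to be the main obstacle, is the comparison $\hat{\epsilon}_k\le\epsilon_k$ for $k=2,\dots,d-1$: contracting the first block of modes against the orthonormal columns of $\textbf{U}_1$ must not enlarge the best rank-$r_k$ approximation error of the later unfoldings. To see this I would identify the relevant unfolding of $\widetilde{\textbf{A}}$ with $(\textbf{U}_1^{T}\otimes\textbf{I})\textbf{A}_k$, that is, $\textbf{A}_k$ left-multiplied by a matrix with orthonormal rows; such a multiplication is a contraction, so $\sigma_i\big((\textbf{U}_1^{T}\otimes\textbf{I})\textbf{A}_k\big)\le\sigma_i(\textbf{A}_k)$ for every $i$. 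Summing the tails of the squared singular values beyond index $r_k$ and invoking Eckart--Young once more then gives $\hat{\epsilon}_k\le\epsilon_k$. Substituting into the Pythagorean identity yields $||\textbf{A}-\textbf{B}||_F^2\le\epsilon_1^2+\sum_{k=2}^{d-1}\epsilon_k^2=\sum_{k=1}^{d-1}\epsilon_k^2$, which closes the induction. The delicate points to get right are the index bookkeeping in the reshape (ensuring the $(k-1)$-th unfolding of $\widetilde{\textbf{A}}$ really corresponds to $\textbf{A}_k$) and the orthonormal-rows contraction estimate, since everything else follows from Eckart--Young and the Pythagorean splitting.
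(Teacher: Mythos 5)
The paper does not actually prove this proposition; it quotes it as a known result (Theorem 2.2 of \cite{TTcrossApprox:2010}, i.e.\ the TT-SVD error bound), so the comparison target is the original proof in that reference. Your argument is correct and is essentially that standard TT-SVD proof: induction on $d$, peeling off the first core by truncated SVD, the Pythagorean splitting coming from $\textbf{U}_1^T\textbf{E}_1=0$, and the key monotonicity $\hat{\epsilon}_k\leq\epsilon_k$ for the unfoldings of the reduced tensor. The only (cosmetic) difference is in that last step: the original argument takes the best rank-$r_k$ approximant $\textbf{C}$ of $\textbf{A}_k$ and observes that its image under the contraction $(\textbf{U}_1^T\otimes\textbf{I})$ is a rank-$\leq r_k$ competitor whose error cannot exceed $\|\textbf{A}_k-\textbf{C}\|_F$, whereas you invoke the singular-value inequality $\sigma_i(P\textbf{M})\leq\sigma_i(\textbf{M})$ for $\|P\|_2\leq 1$ together with Eckart--Young; both are valid, and your route needs the slightly heavier spectral fact while the original needs only that orthogonal projection is a Frobenius-norm contraction.
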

\begin{remark}
The Prop.\ref{lemma_of_TTapproximation} allows us to control the accuracy and TT/QTT-ranks, when we compute the approximation of any tensor in the TT/QTT-format.
\end{remark}
Finally, the approximation of the function $\textbf{h}^T \textbf{h}$ in the FKE \eqref{PathwiseRobustDMZ_KFE} in the QTT-format can be obtained using the same approach as $f_k$ in the convection operator. Specifically, we discretize the function $\textbf{h}^T \textbf{h}$ on the spatial tensor grid and diagonalize it to a matrix denoted by $\textbf{Q}_d$, i.e.,
\[
\textbf{Q}_d(l_1,l_2,\cdots,l_d,l_1,l_2,\cdots,l_d) = (\textbf{h}^T \textbf{h})(x_1(l_1),x_2(l_2),\cdots,x_d(l_d)).
\]
Then, we approximate $\textbf{Q}_d$ by a low-rank QTT-format using the TT-SVD algorithm. 

\subsection{The offline procedure}
\noindent
In the offline procedure, we first assemble the discretization of the operators involved in the FKE, including the Laplace operator \eqref{ndLaplace}, the convection operator \eqref{driftoperator} and the multiplication operator $\textbf{Q}_d$ associated with the function $\textbf{h}^T \textbf{h}$, into a tensor $\textbf{A}$, i.e. 
\begin{align}
\textbf{A} = \frac{1}{2}\Delta_{d} - \textbf{C}_d -\frac{1}{2}\textbf{Q}_d.
\end{align}
In this paper, we assume the drift and observation functions are time-independent. 
Thus, the full discrete scheme for the FKE \eqref{PathwiseRobustDMZ_KFE} becomes
\begin{align}\label{AddingOperators_FKE}
\reallywidehat{U}_{\textbf{l},j}^{n} = \reallywidehat{\big(\tau \textbf{A}+ \textbf{I}\big)}\reallywidehat{U}_{\textbf{l},j}^{n-1}, ~~n = 1,\cdots,\frac{\Delta T}{\tau},
\end{align}
where $\reallywidehat{U}_{\textbf{l},j}^{n}$ is the QTT-format approximation 
of $\widetilde{u}_j(\textbf{x}_\textbf{l},t_{j-1}+n\tau)$, $\textbf{l}=(l_1,l_2,\cdots,l_d)$, $1\leq l_i \leq N$, $i=1,...,d$, i.e., the numerical solution 
of the FKE \eqref{PathwiseRobustDMZ_KFE} (see Section \ref{Sec:FDscheme}) and $\reallywidehat{\big(\tau \textbf{A}+ \textbf{I}\big)}$ is the QTT-format approximation of the tensor $\big(\tau \textbf{A}+ \textbf{I}\big)$.

Recall that the discretizations of the Laplace operator, the convection operator and the multiplication operator associated with the function $\textbf{h}^T \textbf{h}$ all have low-rank approximations. Moreover, addition of matrices or tensors in the QTT-format only causes addition of QTT-ranks. Therefore, the tensor $\big(\tau \textbf{A}+ \textbf{I}\big)$ has a low rank QTT-format approximation with a given maximal QTT-rank $r$ or with a certain given precision $\epsilon$ in the sense of Frobenius norm.  

Notice that in the NLF problem, there will be no observation available during the time period with length $\Delta T$. Thus, we directly compute the tensor $(\tau \textbf{A}+ \textbf{I})^{\frac{\Delta T}{\tau}}$ and approximate 
it in the QTT-format. Namely, from the scheme \eqref{AddingOperators_FKE} we obtain that 
\begin{align}\label{AssemblingOperators_FKE}
\reallywidehat{U}_{\textbf{l},j}^{\frac{\Delta T}{\tau}} = \reallywidehat{\big(\tau \textbf{A}+ \textbf{I}\big)^{\frac{\Delta T}{\tau}}}\reallywidehat{U}_{\textbf{l},j}^{0}.
\end{align}
where $\reallywidehat{\big(\tau \textbf{A}+ \textbf{I}\big)^{\frac{\Delta T}{\tau}}}$ is the QTT-format of the tensor $\big(\tau \textbf{A}+ \textbf{I}\big)^{\frac{\Delta T}{\tau}}$.
Exact addition of $\tau \textbf{A}$ and $\textbf{I}$ in the QTT format only increases the rank by one. However, exact multiplication of matrices in the QTT-format will lead to a significant growth of QTT-ranks. In our algorithm, we apply TT-rounding to control the growth of the QTT-rank caused by matrix-matrix multiplication, which can be easily achieved and maintain accuracy \cite{Oseledets:2011,FokkerPlanck:2012}. 
 
\subsection{The online procedure}
\noindent
In this section, we shall demonstrate that using the tensor train decomposition method and precomputed
low-rank approximation tensors, we can achieve fast computing in the online stage. 

At first, we set the initial probability density  according to initial state $\textbf{x}_0$ and solve the FKE
\eqref{PathwiseRobustDMZ_KFE} with such initial condition. At each observing time $t_j$, when a new observation $\textbf{y}_{t_j}$ arrives, we compute the exponential transformation \eqref{PathwiseRobustDMZ_expotransf}, in order to get the initial condition of the FKE \eqref{PathwiseRobustDMZ_KFE}. Then, we solve the FKE
\eqref{PathwiseRobustDMZ_KFE} by our algorithm (\ref{AssemblingOperators_FKE}). All of these operations are done in QTT-format, thus we need to do TT-rounding operation after both the exponential transformation and solving the FKE
\eqref{PathwiseRobustDMZ_KFE} using our algorithm (\ref{AssemblingOperators_FKE}).

\begin{proposition}\label{qtt_online_complexity}
	Suppose the QTT-ranks of all functions required in online procedure on a tensor grid, including $u(\textbf{x},t_j)$ and $\exp[\textbf{h}^T(\textbf{x},t_{j})(\textbf{y}_{t_{j}}-\textbf{y}_{t_{j-1}})]$, are bounded by $r$. The accuracy $\epsilon$ of TT-rounding is properly specified to ensure QTT-ranks of $u(\textbf{x},t)$ are also bounded by $r$ after any TT-rounding procedure. Then, the complexity of the online procedure within each time interval $[t_{j-1},t_{j}]$ is $\mathcal{O}(N^dr^2+d\log_{2}(N)r^6)$, where $N$ is the grid number on each dimension. 
\end{proposition}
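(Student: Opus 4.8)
The plan is to decompose the work done in one interval $[t_{j-1},t_j]$ into the atomic tensor operations that actually appear in the online stage, bound each one using the per-operation QTT complexities recorded in Section \ref{sec:TTmethod}, and then add the bounds. The crucial structural observation is that, because the entire one-step propagator $\reallywidehat{\big(\tau\textbf{A}+\textbf{I}\big)^{\Delta T/\tau}}$ has already been assembled offline, the online work per interval reduces to exactly three items: (i) building the QTT-format of the multiplier $\exp[\textbf{h}^T(\textbf{x},t_j)(\textbf{y}_{t_j}-\textbf{y}_{t_{j-1}})]$ on the tensor grid; (ii) one Hadamard product of this multiplier against the current solution $u(\textbf{x},t_j)$, realizing the exponential transformation \eqref{PathwiseRobustDMZ_expotransf}, followed by a TT-rounding; and (iii) one matrix-by-vector product of the precomputed propagator against the transformed state, again followed by a TT-rounding. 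I would first argue that no further full propagation sweep is needed online, so that these three items form an exhaustive inventory.

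For step (i) I would point out that $\exp(\cdot)$ does not act on a QTT tensor through the algebraic operations of Section \ref{sec:TTmethod}: even though the exponent $\textbf{h}^T(\textbf{y}_{t_j}-\textbf{y}_{t_{j-1}})$ is a low-rank sum, the exponential itself must be sampled on the full tensor grid and then compressed. Evaluating the function at all $N^d$ nodes costs $\mathcal{O}(N^d)$ (the number of observation components being a fixed constant), and compressing the resulting full tensor to QTT-format with target rank $r$ by the TT-SVD of Prop.\ref{lemma_of_TTapproximation} costs $\mathcal{O}(N^d r^2)$; here the dominant step is the first reshaping into a matrix with $\mathcal{O}(N^d)$ entries whose truncated singular value decomposition to rank $r$ costs $\mathcal{O}(N^d r^2)$, while the subsequent unfoldings form a geometrically decreasing series of the same order. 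This single compression supplies the $\mathcal{O}(N^d r^2)$ term, and it is the only place where the full grid is touched.

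For steps (ii) and (iii) I would track the intermediate QTT-ranks carefully, since this is where the exponent $6$ originates. The Hadamard product of two rank-$r$ tensors produces a tensor of rank $r^2$ at cost $\mathcal{O}(d\log_2(N)r^4)$ in QTT-format, and the matrix-by-vector product of the rank-$r$ propagator against the rank-$r$ state likewise produces a rank-$r^2$ result at cost $\mathcal{O}(d\log_2(N)r^4)$. The hypothesis on $\epsilon$ guarantees that the ensuing TT-rounding returns the rank to $r$, but the rounding itself operates on an input of rank $r^2$; since the rounding complexity is cubic in the rank, its cost is $\mathcal{O}\bigl(d\log_2(N)(r^2)^3\bigr)=\mathcal{O}(d\log_2(N)r^6)$, which dominates the $r^4$ formation costs. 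Summing the two rounding-dominated steps gives $\mathcal{O}(d\log_2(N)r^6)$, and adding the compression cost of step (i) yields the claimed $\mathcal{O}(N^d r^2+d\log_2(N)r^6)$.

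The main obstacle I anticipate is the bookkeeping of intermediate ranks rather than any single hard estimate: one must justify that the product-before-rounding rank is exactly $r^2$ (not $r$), feed this into the cubic rounding cost to obtain $r^6$, and simultaneously argue that the seemingly expensive full-grid exponential cannot be avoided yet enters only once per interval, so that its $\mathcal{O}(N^d r^2)$ cost is additive rather than multiplied against the logarithmic QTT work. Verifying that the inventory in the first paragraph is genuinely exhaustive---in particular that the precomputed propagator removes the $\Delta T/\tau$ inner time steps from the online count---is what ultimately separates the real-time bound $\mathcal{O}(N^d r^2+d\log_2(N)r^6)$ from the naive per-step cost.
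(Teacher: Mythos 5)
Your proposal is correct and takes essentially the same route as the paper's proof: both decompose the online step into (a) constructing the QTT-format of the exponential factor from the full grid at cost $\mathcal{O}(N^d r^2)$, (b) one Hadamard product and one matrix-by-vector product in QTT-format at $\mathcal{O}(d\log_2(N)r^4)$ each, and (c) the TT-rounding steps at $\mathcal{O}(d\log_2(N)r^6)$, summing to the claimed bound. Your explicit tracking of the intermediate rank $r^2$ (so that the cubic-in-rank rounding cost becomes $(r^2)^3=r^6$) is a sharper justification of the dominant term than the paper's bare citation, but it does not change the argument's structure.
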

\begin{proof}
The complexity of constructing the QTT-format of  $\exp[\textbf{h}^T(\textbf{x},t_{j})(\textbf{y}_{t_{j}}-\textbf{y}_{t_{j-1}})]$ from a full multidimensional array is $\mathcal{O}(N^dr^2)$ by Theorem 2.1 in \cite{Oseledets:2010}. The exponential transformation is essentially a Hadamard product in the QTT-format whose complexity is $\mathcal{O}(d\log_{2}(N)r^4)$ \cite{Oseledets:2011}. Solving the FKE \eqref{PathwiseRobustDMZ_KFE} is practically a matrix-vector multiplication (\ref{AssemblingOperators_FKE}) in the QTT-format whose complexity is $\mathcal{O}(d\log_{2}(N)r^4)$ \cite{Oseledets:2011}. Requirement of TT-rounding through standard TT-SVD algorithm is $\mathcal{O}(d\log_{2}(N)r^6)$ \cite{Oseledets:2011}.
\end{proof}
Notice that the total degree of freedom is $N^d$ in the spatial discretization. Prop.\ref{qtt_online_complexity} shows that the QTT method is very efficient in the online procedure in solve the NLF problem.  More details will 
be represented in Section \ref{sec:Numerical_example}. We observe that the  QTT-rank $r$ has very slow
growth with respect to $N$ (see Table \ref{tab:erank1}--Table \ref{tab:erank4}), which allows us to solve high-dimensional NLF problems in a real time manner.

%

\subsection{The complete algorithm of the NLF problem}\label{CompleteAlgorithm}
\noindent
In this subsection, we give the complete algorithm of the NLF problem. The off- and on-line computing stages in our algorithm are summarized in the Algorithm \ref{alg:OfflineStageTT} and Algorithm \ref{alg:OnlineStageTT}, respectively. The performance of our method will be demonstrated in Section \ref{sec:Numerical_example}.
\begin{algorithm}[h]
	\caption{Offline computing}  \label{alg:OfflineStageTT}
	\begin{algorithmic}[1]
		\STATE Compute matrices of spatial discretization of operators mentioned in the Section \ref{SpatialDiscretization}, including the Laplace operator, i.e., Eq.\eqref{ndLaplace}, the convection operator, i.e., Eq.\eqref{driftoperator}, and the multiplication operator $\textbf{Q}_d$ associated with the function $\textbf{h}^T\textbf{S}^{-1}\textbf{h}$. 
		\STATE Convert these matrices  into the QTT-format. 
		\STATE Compute the addition of operator matrices in the QTT-format by taking into account the time step $\tau$, i.e. compute $\tau \textbf{A}+\textbf{I}$ in Eq.\eqref{AddingOperators_FKE}.
		\STATE Compute the power of the tensor $\tau \textbf{A}+\textbf{I}$ in the QTT-format, i.e. compute $\reallywidehat{(\tau \textbf{A}+\textbf{I})^{\frac{\Delta T}{\tau}}}$ in Eq.\eqref{AssemblingOperators_FKE}.
	\end{algorithmic}
\end{algorithm}	

\begin{algorithm}[h]
	\caption{Online computing}  \label{alg:OnlineStageTT}
	\begin{algorithmic}[1]
	\STATE Set up the initial data $u(\textbf{x},0) = \sigma_0(\textbf{x})$ of the FKE \eqref{PathwiseRobustDMZ_KFE}  according to the distribution of the initial state $\textbf{x}_0$, convert  $u(\textbf{x},0)$ into a QTT-format, and apply the propagator operator \eqref{AssemblingOperators_FKE} to get the predicted solution at time $t_1$, denoted by $\reallywidehat{U}_{\textbf{l},1}^{\frac{\Delta T}{\tau}}$. 
	\FOR{$j=1\to N_t-1$}
	\STATE Convert the term $\exp[\textbf{h}^T(\textbf{x},t_{j})\textbf{S}^{-1}(t_{j})(\textbf{y}_{t_{j}}-\textbf{y}_{t_{j-1}})]$ into the QTT-format.
	\STATE Assimilate the new observation data $\textbf{y}_{t_j}$ into the predicted solution $\reallywidehat{U}_{\textbf{l},j}^{\frac{\Delta T}{\tau}}$ using a QTT-format Hadamard product:
	\[		
	\reallywidehat{U}_{\textbf{l},j+1}^{0}=\reallywidehat{\exp[\textbf{h}^T(\textbf{x},t_{j})\textbf{S}^{-1}(t_{j})(\textbf{y}_{t_{j}}-\textbf{y}_{t_{j-1}})]}\reallywidehat{U}_{\textbf{l},j}^{\frac{\Delta T}{\tau}}.
	\]
	\STATE Compute the predicted solution at time $t_{j+1}$ using a matrix-vector multiplication in the QTT-format:
	\[
	\reallywidehat{U}_{\textbf{l},j+1}^{\frac{\Delta T}{\tau}} = \reallywidehat{(\tau \textbf{A}+ \textbf{I})^{\frac{\Delta T}{\tau}}}\reallywidehat{U}_{\textbf{l},j+1}^{0}.
	\]
	\STATE Calculate related statistics of prediction by using $\reallywidehat{U}_{\textbf{l},j+1}^{\frac{\Delta T}{\tau}}$ as the unnormalized density function at time $t_{j+1}$.
	\ENDFOR
	\end{algorithmic}
\end{algorithm}

\section{Convergence analysis}
\noindent
In this section, we shall study the convergence of the numerical solution obtained by our method to the solution 
of the DMZ equation. For simplicity of notations in the analysis, we assume $\textbf{S} = \textbf{I}_d$, $\textbf{Q} = \textbf{I}_d$ and $\textbf{g} = \textbf{I}_d$. Note that the proof is straightforward if $\textbf{Q}$, $\textbf{S}$ are general covariance matrices and $\textbf{g}$ is a general matrix. 
\subsection{Some assumptions and propositions}
\noindent
Before proceeding to the main analysis, let us first introduce some assumptions as follows.
\begin{enumerate}
	\item[[Asm.1]] The following term is bounded in $\mathcal{R}^d\times [0,T]$, i.e., 		
\begin{align}
-\frac{1}{2}\textbf{h}^T\textbf{h}-\frac{1}{2}\Delta K - \textbf{f}\cdot\nabla K+\frac{1}{2}|\nabla K|^2 + |\textbf{f}-\nabla K |\leq c_1, \text{~~}\forall (\textbf{x},t)\in \mathcal{R}^d\times [0,T],
\label{condition-for-elliptic}
\end{align}
where $K = \textbf{h}^T \textbf{y}_t$, $c_1$ is a constant possibly depending on $T$.
\item[[Asm.2]] The drift function $\textbf{f}$ is bounded in a bounded domain $\Omega$, i.e.
$\sup|f_i(\textbf{x})| \leq C_f < \infty, \forall\textbf{x}\in\Omega,\text{~~~} i=1,2,\cdots,d$
and Lipschitz continuous, i.e. $|f_i(\textbf{x}_1) - f_i(\textbf{x}_2)| \leq L_f |\textbf{x}_1 - \textbf{x}_2|, \forall\textbf{x}_1,\textbf{x}_2\in\Omega,~i=1,2,\cdots,d$, where  $ L_f$ is the Lipschitz constant. 
\item[[Asm.3]] The observation function $\textbf{h}$ is bounded in a bounded domain $\Omega$, i.e.
$\sup|h_i(\textbf{x})| \leq C_h < \infty,\forall\textbf{x}\in\Omega, \text{~~~} i=1,2, \cdots,m.$
\item[[Asm.4]] The observation series $K = \textbf{h}^T \textbf{y}_t$ is bounded in a bounded domain $\Omega$ on the observation time sequence $0=t_0 < t_1 < \cdot\cdot\cdot < t_{N_t} = T$, i.e.
	\begin{align}
	| 2K |\leq c_2, \text{~~}\forall (\textbf{x},t)\in \Omega\times \{t_0,t_1,\cdots,t_{N_t}\}.
	\label{bound-K}
	\end{align}
\end{enumerate}  
After introducing necessary assumptions, we are in the position to proceed the convergence analysis. When the condition \eqref{condition-for-elliptic} in Asm.1 is satisfied, one can choose a bounded domain $\Omega$ large enough to capture almost all the density of the DMZ equation \eqref{DMZequation}, since \eqref{DMZequation} is essentially a parabolic-type PDE. Thus, we can restrict the DMZ equation \eqref{DMZequation} on the bounded domain 
$\Omega$. 

Let $u(\textbf{x},t)$ be the solution of the DMZ equation \eqref{DMZequation} restricted on $\Omega\times[0,T]$ satisfying 
\begin{equation}\label{TheoremCeq1}
\left\{
\begin{aligned}
\frac{\partial u}{\partial t}(\textbf{x},t) &= \frac{1}{2}\Delta u(\textbf{x},t) + \textbf{F}(\textbf{x},t)\cdot\nabla u(\textbf{x},t) + J(\textbf{x},t)u(\textbf{x},t),\\
u(\textbf{x},0)&=\sigma_0(\textbf{x}),\\
u(\textbf{x},t)|_{\partial\Omega}&=0,
\end{aligned}
\right.
\end{equation}
where $\textbf{F}=-\textbf{f}+\nabla K$, $K = \textbf{h}^T \textbf{y}_t$, and $J=-$div$\textbf{f}-\frac{1}{2}\textbf{h}^T\textbf{h}+\frac{1}{2}\Delta K-\textbf{f}\cdot\nabla K+\frac{1}{2}|\nabla K|^2$. 

Let $\mathcal{P}_{N_t}=\{0=t_0<t_1<\cdots<t_{N_t}=T\}$ be a partition of $[0,T]$, where $t_j = \frac{jT}{N_t}$, $j=0,...,N_t$. Let $u_j(\textbf{x},t)$ be the solution of the following equation defined 
on $\Omega\times[t_{j-1},t_j]$,
\begin{equation}\label{TheoremCeq2}
\left\{
\begin{aligned}
\frac{\partial u_j}{\partial t}(\textbf{x},t) &= \frac{1}{2}\Delta u_j(\textbf{x},t) + \textbf{F}(\textbf{x},t_{j-1})\cdot\nabla u_j(\textbf{x},t) + J(\textbf{x},t_{j-1})u_j(\textbf{x},t),\\
u_j(\textbf{x},t_{j-1})&=u_{j-1}(\textbf{x},t_{j-1}),\\
u_j(\textbf{x},t)|_{\partial\Omega}&=0, 
\end{aligned}
\right.
\end{equation}
where we use the condition $u_0(\textbf{x},t)=\sigma_0(\textbf{x})$. Then, the restriction of the solution $u(\textbf{x},t)$ of \eqref{TheoremCeq1} on each domain $\Omega\times[t_{j-1},t_j]$ can be approximated by the solution $u_j(\textbf{x},t)$ of \eqref{TheoremCeq2}. Specifically, we have the following error estimate. 
\begin{proposition}[Theorem C of \cite{YauYau:2008}]\label{YauYau:theoremC}
Let $\Omega$ be a bounded domain in $\mathcal{R}^d$. Let $\textbf{F}:\Omega\times[0,T]\rightarrow\mathcal{R}^d$ be a family of vector fields that are $\mathcal{C}^\infty$ in $\textbf{x}$ and Holder continuous in $t$ with exponent $\alpha$ and  $J:\Omega\times[0,T]\rightarrow\mathcal{R}$ be a $\mathcal{C}^\infty$ function in $\textbf{x}$ and Holder continuous in $t$ with exponent $\alpha$ such that following properties are satisfied 
\begin{align}
&|\mathop{div} \textbf{F}(\textbf{x},t)| + 2|J(\textbf{x},t)| + |\textbf{F}(\textbf{x},t)| \leq c_3, \text{~ for~} (\textbf{x},t)\in\Omega\times[0,T],\\
&|\textbf{F}(\textbf{x},t) - \textbf{F}(\textbf{x},\bar{t})| + |\mathop{div}\textbf{F}(\textbf{x},t) - \mathop{div}\textbf{F}(\textbf{x},\bar{t})| + |J(\textbf{x},t) - J(\textbf{x},\bar{t})|\leq c_4|t-\bar{t}|^\alpha, \nonumber \\ 
&\text{~ for~} (\textbf{x},t),(\textbf{x},\bar{t})\in\Omega\times[0,T].
\end{align}
Then, we have the following estimate holds
\begin{align}\label{TheoremCestimate}
\int_\Omega\big|u-u_{N_t}\big|(\textbf{x},t)d\textbf{x}\leq \frac{2c_5}{\alpha+1}\frac{T^{\alpha+1}e^{c_3T}}{{N_t}^\alpha},
\end{align} 
where $c_5 = c_4e^{c_3T} + c_4 \sqrt{\mathop{Vol}(\Omega)}e^{c_3^2T} \sqrt{2c_3^2T\int_\Omega u^2(\textbf{x},0) +\int_\Omega |\nabla u(\textbf{x},0)|^2 }$. Specifically, $u(\textbf{x},t)=\lim_{N_t\rightarrow\infty}u_{N_t}(\textbf{x},t)$ in the $L^1$ sense on $\Omega$. 
The convergence rate $\alpha$ depends on the regularity of  $\textbf{F}$ and $J$ in the time variable $t$. 
\end{proposition}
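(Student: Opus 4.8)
The plan is to estimate the error $u-u_{N_t}$ slab by slab in time and then accumulate, using an $L^1$-type stability estimate for the frozen-coefficient parabolic evolution together with the H\"older-in-time continuity of $\textbf{F}$ and $J$. First I would set up the local error equation: on each slab $\Omega\times[t_{j-1},t_j]$ the difference $w_j := u-u_j$ between the true solution of \eqref{TheoremCeq1} and the frozen-coefficient solution $u_j$ of \eqref{TheoremCeq2} satisfies the \emph{same} frozen-coefficient parabolic equation (with coefficients $\textbf{F}(\textbf{x},t_{j-1})$, $J(\textbf{x},t_{j-1})$) but with a source term $R_j(\textbf{x},t)=[\textbf{F}(\textbf{x},t)-\textbf{F}(\textbf{x},t_{j-1})]\cdot\nabla u+[J(\textbf{x},t)-J(\textbf{x},t_{j-1})]u$ and zero Dirichlet data. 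By the H\"older hypothesis on $\textbf{F}$ and $J$ this residual obeys the pointwise bound $|R_j(\textbf{x},t)|\leq c_4\,(t-t_{j-1})^\alpha\,(|\nabla u|+|u|)$.

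The second step is the $L^1$ stability estimate for the frozen operator. Multiplying the equation for $w_j$ by $\mathrm{sgn}(w_j)$ and integrating over $\Omega$, the second-order term is handled by Kato's inequality, $\mathrm{sgn}(w_j)\tfrac12\Delta w_j\leq\tfrac12\Delta|w_j|$, whose integral is nonpositive under the zero boundary condition; the first-order term integrates by parts to $-\int_\Omega(\mathrm{div}\,\textbf{F})|w_j|$; and the zeroth-order term gives $\int_\Omega J|w_j|$. Invoking $|\mathrm{div}\,\textbf{F}|+2|J|+|\textbf{F}|\leq c_3$ then yields $\frac{d}{dt}\int_\Omega|w_j|\leq c_3\int_\Omega|w_j|+\int_\Omega|R_j|$, which via Gronwall shows that the frozen evolution is $L^1$-stable with growth factor $e^{c_3(t-s)}$.

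Third, I would control the coefficient of the residual. Since $\int_\Omega(|\nabla u|+|u|)$ enters, I would bound $\int_\Omega|u|$ directly and, by Cauchy--Schwarz, $\int_\Omega|\nabla u|\leq\sqrt{\mathrm{Vol}(\Omega)}\,(\int_\Omega|\nabla u|^2)^{1/2}$, through a standard parabolic energy estimate: testing \eqref{TheoremCeq1} against $u$ and absorbing the advection term $\int_\Omega\textbf{F}\cdot\nabla u\,u$ by Young's inequality (which is where the bound $|\textbf{F}|\leq c_3$ enters \emph{squared}) propagates the $H^1$-in-space norm in time with growth governed by $c_3^2$. This produces exactly the factor $\sqrt{2c_3^2 T\int_\Omega u^2(\textbf{x},0)+\int_\Omega|\nabla u(\textbf{x},0)|^2}$ together with the $e^{c_3^2 T}$ appearing in $c_5$, while the $|u|$ contribution supplies the $c_4 e^{c_3 T}$ summand; combining the two assembles the constant $c_5$.

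Finally, I would accumulate. Writing $u-u_{N_t}$ as the telescoping sum of the local errors, each propagated forward by the remaining frozen operators and hence amplified by at most $e^{c_3 T}$ in $L^1$, the contribution of slab $j$ is controlled by $\int_{t_{j-1}}^{t_j}e^{c_3(T-\tau)}\int_\Omega|R_j(\tau)|\,d\tau$; integrating $(\tau-t_{j-1})^\alpha$ over an interval of length $T/N_t$ gives $\frac{(T/N_t)^{\alpha+1}}{\alpha+1}$, and summing the $N_t$ slabs turns $N_t\cdot(T/N_t)^{\alpha+1}$ into $T^{\alpha+1}/N_t^\alpha$, yielding the claimed bound \eqref{TheoremCestimate}. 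The main obstacle I expect is making the $L^1$ estimate rigorous at the level of the second-order term, since $\mathrm{sgn}$ is not smooth and Kato's inequality must be justified distributionally or via a regularization $\mathrm{sgn}_\delta$; a closely related difficulty is tracking the error accumulation cleanly across slabs, because $u_j$ is matched to $u_{j-1}$ rather than to the true solution $u$ at each interface, so the telescoping must be organized so that only the local freezing residuals, and not the interface mismatches, drive the estimate.
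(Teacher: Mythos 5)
The paper itself does not prove this proposition: it is imported verbatim as Theorem~C of \cite{YauYau:2008}, so there is no in-paper argument to compare yours against, and I assess your proposal against the statement itself. Your overall architecture is the right one and accounts for most of the structure of the estimate: the slab residual $R_j$ bounded by $c_4(t-t_{j-1})^\alpha(|\nabla u|+|u|)$, the $L^1$ stability of the frozen-coefficient operator via $\mathrm{sgn}(w_j)$ and Kato's inequality (with the drift term integrated by parts against $\mathop{div}\textbf{F}$, using $|\mathop{div}\textbf{F}|+|J|\leq c_3$), the observation that the interface matching gives $w_j(\textbf{x},t_{j-1})=w_{j-1}(\textbf{x},t_{j-1})$ so that only freezing residuals drive the recursion, and the count $N_t\cdot(T/N_t)^{\alpha+1}/(\alpha+1)=T^{\alpha+1}/\bigl((\alpha+1)N_t^{\alpha}\bigr)$ producing the rate, all match the shape of \eqref{TheoremCestimate} and of $c_5$ (the $e^{c_3T}$ from $L^1$ stability, the $\sqrt{\mathop{Vol}(\Omega)}$ from Cauchy--Schwarz, the $c_3^2$ from Young's inequality).

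The genuine gap is your third step. Testing \eqref{TheoremCeq1} against $u$ propagates only $\int_\Omega u^2$ in time and controls the gradient in the space-time integrated sense $\int_0^T\!\!\int_\Omega|\nabla u|^2$; it does not give the pointwise-in-time bound on $\int_\Omega|\nabla u(\cdot,s)|^2$ that your residual estimate needs at every $s$, and it cannot produce a constant containing $\int_\Omega|\nabla u(\textbf{x},0)|^2$ --- the appearance of that quantity in $c_5$ is the fingerprint of a genuine $H^1$ propagation estimate, not an $L^2$ one. The repair is standard: test additionally against $-\Delta u$, apply Young's inequality with $|\textbf{F}|\leq c_3$ and $2|J|\leq c_3$ to absorb $\int|\Delta u|^2$, obtaining
\begin{align}
\frac{d}{dt}\int_\Omega|\nabla u|^2\leq 2c_3^2\int_\Omega|\nabla u|^2+Cc_3^2\int_\Omega u^2,\nonumber
\end{align}
and combine with the $L^2$ bound $\int_\Omega u^2(\cdot,t)\leq e^{2c_3^2t}\int_\Omega u^2(\cdot,0)$ and Gronwall to get $\|\nabla u(\cdot,t)\|_{L^2}\leq e^{c_3^2t}\bigl(2c_3^2t\int_\Omega u^2(\textbf{x},0)+\int_\Omega|\nabla u(\textbf{x},0)|^2\bigr)^{1/2}$ (up to how Young's inequality is split), which is exactly the factor in $c_5$. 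Alternatively, one can survive with only the space-time gradient bound by applying Cauchy--Schwarz in time on each slab and then over slabs --- this still yields the $N_t^{-\alpha}$ rate, but with a constant of a different form than the one stated, so it would not prove the proposition as written. A smaller omission: the summand $c_4e^{c_3T}$ in $c_5$ carries no norm of the initial data, so your bound on $\int_\Omega|u|$ implicitly uses the normalization $\int_\Omega u(\textbf{x},0)\,d\textbf{x}=1$ of the initial density; this should be stated, otherwise that term must read $c_4e^{c_3T}\|u(\cdot,0)\|_{L^1}$.
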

It is shown in \cite{YauYau:2008} (see Proposition 2.1) that the pathwise robust DMZ equations can be computed by solving the FKE \eqref{PathwiseRobustDMZ_KFE}. 
\begin{proposition}[Proposition 2.1 of \cite{YauYau:2008}]\label{YauYau:proposition2.1}
$\widetilde{u}_j(\textbf{x},t)$ satisfies the forward Kolmogorov equation
\begin{align}\label{FKEinYauProposition}
\frac{\partial \widetilde{u}_j}{\partial t}(\textbf{x},t)=\frac{1}{2}\Delta\widetilde{u}_j(\textbf{x},t)-\textbf{f}(\textbf{x})\cdot\nabla \widetilde{u}_j(\textbf{x},t) - \Big(\mathop{div} f(\textbf{x}) + \frac{1}{2}\textbf{h}(\textbf{x})^T\textbf{h}(\textbf{x})\Big)\widetilde{u}_j(\textbf{x},t)
\end{align}
for $t_{j-1}\leq t\leq t_j$ if and only if
\begin{align}
u_j(\textbf{x},t) = \exp(-\textbf{h}(\textbf{x})^T\textbf{y}_{t_{j-1}})\widetilde{u}_j(\textbf{x},t)
\label{u-equal-exputitle}
\end{align}
satisfies the robust DMZ equation with observation being frozen at $\textbf{y}_{t_{j-1}}$:
\begin{align}
\frac{\partial u_j}{\partial t}(\textbf{x},t) &= \frac{1}{2}\Delta u_j(\textbf{x},t) + \textbf{F}(\textbf{x},t_{j-1})\cdot\nabla u_j(\textbf{x},t) + J(\textbf{x},t_{j-1})u_j(\textbf{x},t),
\end{align}
where $\textbf{F}=-\textbf{f}+\nabla K$, $K = \textbf{h}^T \textbf{y}_t$, and $J=-$div $\textbf{f}-\frac{1}{2}\textbf{h}^T\textbf{h}+\frac{1}{2}\Delta K-\textbf{f}\cdot\nabla K+\frac{1}{2}|\nabla K|^2$.
\end{proposition}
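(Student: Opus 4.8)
The plan is to prove the equivalence by a direct substitution of the exponential change of variables, exploiting the fact that freezing the observation makes the conjugating factor purely spatial. First I would introduce the shorthand $\phi(\textbf{x}) := \textbf{h}(\textbf{x})^T\textbf{y}_{t_{j-1}}$, so that the transformation \eqref{u-equal-exputitle} reads $u_j = e^{-\phi}\widetilde{u}_j$, equivalently $\widetilde{u}_j = e^{\phi}u_j$. The crucial structural observation is that, since the observation is frozen at $\textbf{y}_{t_{j-1}}$ and $\textbf{h}$ is time-independent on $[t_{j-1},t_j]$, the factor $e^{\phi}$ does not depend on $t$; moreover $\phi(\textbf{x}) = K(\textbf{x},t_{j-1})$, so that $\nabla\phi = \nabla K(\cdot,t_{j-1})$ and $\Delta\phi = \Delta K(\cdot,t_{j-1})$.

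Starting from the FKE \eqref{FKEinYauProposition} for $\widetilde{u}_j$, I would substitute $\widetilde{u}_j = e^{\phi}u_j$ and expand the three differential operators by the product rule. Because $\phi$ is time-independent, $\partial_t\widetilde{u}_j = e^{\phi}\partial_t u_j$; the gradient gives $\nabla\widetilde{u}_j = e^{\phi}(\nabla u_j + u_j\nabla\phi)$; and differentiating once more yields
\[
\Delta\widetilde{u}_j = e^{\phi}\big(\Delta u_j + 2\nabla\phi\cdot\nabla u_j + (|\nabla\phi|^2 + \Delta\phi)u_j\big).
\]
Inserting these into \eqref{FKEinYauProposition} and dividing through by the nonvanishing factor $e^{\phi}$ leaves an equation for $u_j$ alone. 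Collecting the first-order terms produces the coefficient $-\textbf{f}+\nabla\phi = \textbf{F}(\cdot,t_{j-1})$, while collecting the zeroth-order terms produces exactly
\[
\tfrac{1}{2}|\nabla\phi|^2 + \tfrac{1}{2}\Delta\phi - \textbf{f}\cdot\nabla\phi - \mathop{div}\textbf{f} - \tfrac{1}{2}\textbf{h}^T\textbf{h} = J(\cdot,t_{j-1}),
\]
which is precisely the robust DMZ equation frozen at $\textbf{y}_{t_{j-1}}$.

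The reverse implication requires no new work: because the map $\widetilde{u}_j\mapsto e^{-\phi}\widetilde{u}_j$ is a bijection with a smooth, everywhere-positive factor, the same chain of identities read backwards converts the frozen DMZ equation into the FKE, giving the stated ``if and only if''. The only point demanding genuine care is the bookkeeping in assembling the zeroth-order coefficient: the quadratic term $\tfrac{1}{2}|\nabla K|^2$ and the Laplacian term $\tfrac{1}{2}\Delta K$ in $J$ arise solely from the second derivative of the exponential factor, and the cross term $-\textbf{f}\cdot\nabla\phi$ from the interaction of the convection with its first derivative; verifying that these combine into exactly $J(\cdot,t_{j-1})$, with no residual terms, is what makes the identification tight. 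I would also remark that the freezing of the observation is exactly what renders $e^{\phi}$ time-independent and lets $\partial_t$ pass through it; without freezing, the time derivative of the exponent would contribute an extra term involving $\partial_t(\textbf{h}^T\textbf{y}_t)$, which is precisely the additional term distinguishing the full pathwise robust DMZ equation \eqref{Intro_PathwiseRobustDMZ} from the FKE \eqref{PathwiseRobustDMZ_KFE}.
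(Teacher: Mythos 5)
Your proof is correct: the expansion $\Delta\widetilde{u}_j = e^{\phi}\bigl(\Delta u_j + 2\nabla\phi\cdot\nabla u_j + (|\nabla\phi|^2 + \Delta\phi)u_j\bigr)$ is right, the first-order terms assemble to $(-\textbf{f}+\nabla\phi)\cdot\nabla u_j = \textbf{F}(\cdot,t_{j-1})\cdot\nabla u_j$, the zeroth-order terms assemble to exactly $J(\cdot,t_{j-1})$ with $K(\cdot,t_{j-1})=\phi$, and the invertibility of multiplication by the positive factor $e^{\phi}$ gives the converse. Note that the paper itself offers no proof of this statement; it imports it verbatim as Proposition 2.1 of \cite{YauYau:2008}, and your direct substitution with the time-independent conjugating factor is precisely the standard argument used in that reference, including your closing observation that freezing $\textbf{y}_t$ at $\textbf{y}_{t_{j-1}}$ is what kills the extra $\partial_t(\textbf{h}^T\textbf{S}^{-1})\textbf{y}_t$ term appearing in the unfrozen pathwise robust DMZ equation \eqref{Intro_PathwiseRobustDMZ}.
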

In fact, the FKE \eqref{FKEinYauProposition} is obtained from the general form of the FKE \eqref{PathwiseRobustDMZ_KFE} by letting $\textbf{S} = \textbf{I}_d$, $\textbf{Q} = \textbf{I}_d$ and $\textbf{g} = \textbf{I}_d$.

\subsection{Convergence analysis for the FD scheme}\label{Sec:FDscheme}
\noindent
From time $t=t_{j-1}$ to time $t=t_{j}$, one can solve the FKE \eqref{FKEinYauProposition} by using the 
finite difference method. Specifically for time discretization, we partition the time interval $[t_{j-1},t_j]$ into an equispaced grid, i.e. $t_{j-1}+n\tau$, $n=0,1,\cdots,\frac{\Delta T}{\tau}$, where $\tau$ is the time step. We will analyze the error of FD scheme here. 
The spatial discretization has been discussed in Section \ref{SpatialDiscretization}. We 
use the same notations here, i.e., $h$ is the spatial mesh size. Notice that $\textbf{h}$ is the observation function. Let $U_{\textbf{l},j}^{n}$ denote the approximations of the solution 
at these grid points and time $t=t_{j-1}+n\tau$, i.e., 
\begin{align}
U_{\textbf{l},j}^{n}\approx \widetilde{u}_{j}\big(\textbf{x}_{\textbf{l}},t_{j-1}+n\tau\big)=\widetilde{u}_{j}\big(x_1(l_1),x_2(l_2),\cdots,x_d(l_d),t_{j-1}+n\tau\big).
\end{align} 
Then, the FD scheme for the FKE \eqref{FKEinYauProposition} reads 
\begin{align}\label{FDscheme_for_FKE}
\frac{U^{n+1}_{\textbf{l},j} - U^n_{\textbf{l},j}}{\tau} &= \frac{1}{2}\frac{\sum_{i=1}^{d} (U^n_{\textbf{l}+\textbf{e}_i,j} + U^n_{\textbf{l}-\textbf{e}_i,j}) - (2d)U^n_{\textbf{l},j}}{h^2} - \frac{\sum_{i=1}^d (f_i)_{\textbf{l}}\big(  U^n_{\textbf{l}+\textbf{e}_i,j} -  U^n_{\textbf{l}-\textbf{e}_i,j}\big)  }{2h} \nonumber \\
&- \frac{\sum_{i=1}^d\big(  (f_i)_{\textbf{l}+\textbf{e}_i} -  (f_i)_{\textbf{l}-\textbf{e}_i}\big)  U^n_{\textbf{l},j} }{2h}- \frac{1}{2}(\textbf{h}^T\textbf{h})_\textbf{l} U^n_{\textbf{l},j} 
\end{align}
for all $\textbf{l}=(l_1,l_2,\cdots,l_d)$, $1\leq l_i \leq N$, $i=1,...,d$, which is a $d$-dimensional index vector and $\textbf{e}_i$ is a unit vector with a $1$ in the $i$th coordinate and $0$'s elsewhere. The convergence result for the FD scheme \eqref{FDscheme_for_FKE} can be summarized into the following two lemmas.  
\begin{lemma} 
The truncation error of the FD scheme \eqref{FDscheme_for_FKE} for the FKE \eqref{FKEinYauProposition} 
is $\mathcal{O}(\tau+h^2)$. Hence the FD scheme \eqref{FDscheme_for_FKE} is consistent.  
\end{lemma}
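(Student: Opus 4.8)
The plan is to insert the exact solution of \eqref{FKEinYauProposition} into the difference scheme \eqref{FDscheme_for_FKE}, Taylor-expand each discrete operator about the grid point, and show that the residual left after invoking the PDE is $\mathcal{O}(\tau+h^2)$. First I would define the local truncation error $T_{\textbf{l},j}^{n}$ to be the amount by which the exact values $\widetilde{u}_j(\textbf{x}_{\textbf{l}},t_{j-1}+n\tau)$ fail to satisfy \eqref{FDscheme_for_FKE}; that is, I replace every $U_{\cdot,j}^{m}$ in the scheme by the corresponding exact value and take the difference of the two sides. Consistency is then precisely the statement that $T_{\textbf{l},j}^{n}\to 0$ as $\tau,h\to 0$, uniformly in $\textbf{l}$ and $n$, and the claimed rate is a quantitative version of this.

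Second, I would expand the four discrete operators separately, assuming enough smoothness of $\widetilde{u}_j$ (bounded derivatives through order four in $\textbf{x}$ and order two in $t$ on $\overline{\Omega}\times[t_{j-1},t_j]$, supplied by parabolic regularity of \eqref{FKEinYauProposition}). The forward difference in time reproduces $\partial_t\widetilde{u}_j$ with error $\frac{\tau}{2}\partial_{tt}\widetilde{u}_j+\mathcal{O}(\tau^2)=\mathcal{O}(\tau)$; the symmetric second difference in direction $i$ reproduces $\partial_{x_ix_i}\widetilde{u}_j$ with error $\frac{h^2}{12}\partial_{x_i}^4\widetilde{u}_j+\mathcal{O}(h^4)=\mathcal{O}(h^2)$, so the whole Laplacian block matches $\frac12\Delta\widetilde{u}_j$ to order $h^2$; the central first difference applied to $\widetilde{u}_j$ reproduces $\partial_{x_i}\widetilde{u}_j$ with error $\frac{h^2}{6}\partial_{x_i}^3\widetilde{u}_j=\mathcal{O}(h^2)$, matching $-\textbf{f}\cdot\nabla\widetilde{u}_j$ since the weights $(f_i)_{\textbf{l}}$ are exact point values; and the central first difference applied to $f_i$ reproduces $\partial_{x_i}f_i$, so the third block matches $-(\mathop{div}\textbf{f})\widetilde{u}_j$. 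The zeroth-order term $-\frac12(\textbf{h}^T\textbf{h})_{\textbf{l}}\widetilde{u}_j$ is an exact point evaluation and contributes no error.

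Third, I would collect the leading terms. Their sum is exactly the right-hand side of \eqref{FKEinYauProposition} evaluated at $(\textbf{x}_{\textbf{l}},t_{j-1}+n\tau)$, which cancels $\partial_t\widetilde{u}_j$ by the PDE; what survives is $T_{\textbf{l},j}^{n}=\mathcal{O}(\tau)+\mathcal{O}(h^2)$, with constants controlled by the bounds on the derivatives of $\widetilde{u}_j$ and of $\textbf{f}$. This yields both the stated rate and consistency.

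The main obstacle is a regularity gap rather than the (routine, if lengthy) $d$-dimensional bookkeeping. The $\mathcal{O}(h^2)$ rate for the $-(\mathop{div}\textbf{f})\widetilde{u}_j$ block requires a third $\textbf{x}$-derivative of $f_i$ to close the Taylor remainder, i.e.\ $\textbf{f}\in C^3$, which is one order beyond the global $C^2$ hypothesis stated earlier; likewise the convection and Laplacian blocks need $\widetilde{u}_j\in C^3$ and $C^4$ in space. Under the bare $C^2$ assumption on $\textbf{f}$ the $f$-difference is only $\partial_{x_i}f_i+o(h)$, so one still obtains consistency but the sharp second-order rate needs the extra smoothness. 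I would therefore either strengthen the hypothesis on $\textbf{f}$ to $C^3$ (or $C^{2,\alpha}$ with the accompanying Schauder estimate for $\widetilde{u}_j$) in the statement, or state the contribution of the convection/divergence blocks as $\mathcal{O}(\tau+h)$ while keeping $\mathcal{O}(h^2)$ for the diffusion term, and flag this distinction explicitly.
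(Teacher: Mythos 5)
Your proposal is correct and follows essentially the same route as the paper's proof: insert the exact solution into the scheme, Taylor-expand (second order in time, fourth order for the diffusion block, third order for the drift blocks), and cancel with the PDE to leave a residual $T^n_{\textbf{l},j}=\mathcal{O}(\tau+h^2)$. The only notable differences are cosmetic: the paper expands the central difference of the product $f_i\widetilde{u}_j$ as a single block rather than treating the convection and divergence terms separately as the scheme literally writes them (the two discretizations agree to $\mathcal{O}(h^2)$), and your regularity caveat ($\textbf{f}\in C^3$, $\widetilde{u}_j\in C^4$ in space) is a fair point that applies equally to the paper's expansion, which silently assumes that smoothness beyond the stated $C^2$ hypothesis.
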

\begin{proof}
For each $\textbf{x}_{\textbf{l}}$, $\textbf{l}=(l_1,l_2,\cdots,l_d)$, $1\leq l_i \leq N$, $i=1,...,d$ and $t_j^n=t_{j-1}+n\tau$, $n=0,1,\cdots,\frac{\Delta T}{\tau}-1$, the second-order Taylor expansion of $\widetilde{u}_{j}(\textbf{x}_{\textbf{l}},t_j^n+\tau)$ at the point $(\textbf{x}_{\textbf{l}},t_j^n)$ gives
\begin{align}\label{truncationerror1}
\frac{\widetilde{u}_j(\textbf{x}_{\textbf{l}},t_j^n+\tau)-\widetilde{u}_j(\textbf{x}_{\textbf{l}},t_j^n)}{\tau} = \frac{\partial \widetilde{u}_j}{\partial t}(\textbf{x}_{\textbf{l}},t_j^n) + \frac{\tau}{2}\frac{\partial^2\widetilde{u}_j}{\partial t^2}(\textbf{x}_{\textbf{l}},t_j^n)+o(\tau).
\end{align}
The fourth-order Taylor expansion of $\widetilde{u}_{j}(\textbf{x}_{\textbf{l}}+h\textbf{e}_i,t_j^n)$ and $\widetilde{u}_{j}(\textbf{x}_{\textbf{l}}-h\textbf{e}_i,t_j^n)$ at the point $(\textbf{x}_{\textbf{l}},t_j^n)$, gives
\begin{align}\label{truncationerror2}
\frac{\widetilde{u}_j(\textbf{x}_{\textbf{l}}+h\textbf{e}_i,t_j^n) -2\widetilde{u}_j(\textbf{x}_{\textbf{l}},t_j^n) + \widetilde{u}_j(\textbf{x}_{\textbf{l}}-h\textbf{e}_i,t_j^n)}{2h^2} = \frac{1}{2}\frac{\partial^2\widetilde{u}_j}{\partial x_i^2}(\textbf{x}_{\textbf{l}},t_j^n) + \frac{h^2}{24}\frac{\partial^4\widetilde{u}_j}{\partial x_i^4}(\textbf{x}_{\textbf{l}},t_j^n) + o(h^2)
\end{align}
The third-order Taylor expansion of $(f_i\widetilde{u}_j)(\textbf{x}_{\textbf{l}}+h\textbf{e}_i,t_j^n)$ and $(f_i\widetilde{u}_j)(\textbf{x}_{\textbf{l}}-h\textbf{e}_i,t_j^n)$ at the point $(\textbf{x}_{\textbf{l}},t_j^n)$ gives
\begin{align}\label{truncationerror3}
\frac{(f_i\widetilde{u}_j)(\textbf{x}_{\textbf{l}}+h\textbf{e}_i,t_j^n) - (f_i\widetilde{u}_j)(\textbf{x}_{\textbf{l}}-h\textbf{e}_i,t_j^n)}{2h} = \frac{\partial(f_i\widetilde{u}_j)}{\partial x_i}(\textbf{x}_{\textbf{l}},t_j^n) + \frac{h^2}{6}\frac{\partial^3(f_i\widetilde{u}_j)}{\partial x_i^3}(\textbf{x}_{\textbf{l}},t_j^n) + o(h^2),
\end{align}
for $i=1,2,\cdots,d$. Combining \eqref{truncationerror1}, \eqref{truncationerror2} and \eqref{truncationerror3}, we get the truncation error of the FD scheme \eqref{FDscheme_for_FKE} as
\begin{align}
T^n_{\textbf{l},j} =  \frac{\tau}{2}\frac{\partial^2\widetilde{u}_j}{\partial t^2}(\textbf{x}_\textbf{l},t_j^n)+o(\tau) - \sum_{i=1}^d\frac{h^2}{24}\frac{\partial^4\widetilde{u}_j}{\partial x_i^4}(\textbf{x}_\textbf{l},t_j^n) - \sum_{i=1}^d\frac{h^2}{6}\frac{\partial^3(f_i\widetilde{u}_j)}{\partial x_i^3}(\textbf{x}_\textbf{l},t_j^n) + o(h^2) 
= \mathcal{O}(\tau+h^2).
\label{local-truncation-error}
\end{align} 
The consistency of the FD scheme is proved. 
\end{proof}
\begin{lemma}\label{lemma_of_FDconvergence}
Let $U_{\textbf{l},j}^{n}$, $n = 0,1,\cdots,\frac{\Delta T}{\tau}$ denote the numerical solution obtained by the FD scheme \eqref{FDscheme_for_FKE} and $\widetilde{u}_j(\textbf{x}_\textbf{l},t_{j-1}+n\tau)$ denote the corresponding exact solution to the FKE \eqref{FKEinYauProposition} on $[t_{j-1},t_j]$, 
respectively, where $t_j=t_{j-1}+\Delta T$. Suppose the assumptions Asm.1 to Asm.4 and the stability condition $h<\frac{1}{C_f}$,  $\tau<(\frac{d}{h^2}+dL_f+\frac{d}{2}C_h^2)^{-1}$ are satisfied. Then, we have the following error estimate for the FD scheme \eqref{FDscheme_for_FKE},
\begin{align}\label{FDtopathwiseDMZconvergence1}
||U_{\textbf{l},j}^{\frac{\Delta T}{\tau}} - \widetilde{u}_j(\textbf{x}_\textbf{l},t_{j})||_\infty \leq e^{\Delta TdL_f}||U_{\textbf{l},j}^{0} - \widetilde{u}_j(\textbf{x}_\textbf{l},t_{j-1})||_\infty+\frac{C}{dL_f}(\tau+h^2), 
~\forall \textbf{l}, 
\end{align} 
where $C$ is a generic constant that does not depend on $\tau$ and $h$.
Finally, we have the estimate
\begin{align}\label{FDtopathwiseDMZconvergence2}
||U_{\textbf{l},N_t}^{\frac{\Delta T}{\tau}} - \widetilde{u}_{N_t}(\textbf{x}_\textbf{l},T)||_\infty \leq \frac{Ce^{c_2N_t+TdL_f}}{dL_f(e^{c_2+\Delta TdL_f}-1)}(\tau+h^2),~\forall \textbf{l}, 
\end{align} 
where $c_2$ is a constant defined in \eqref{bound-K} of Asm.4. 
\end{lemma}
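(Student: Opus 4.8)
The plan is to run the standard consistency-plus-stability argument in the discrete $L^\infty$ norm, the one nonroutine ingredient being that the explicit spatial operator has to be arranged so that its one-step update matrix is entrywise nonnegative. First I would introduce the grid error $E^n_{\textbf{l},j} := U^n_{\textbf{l},j} - \widetilde{u}_j(\textbf{x}_\textbf{l},t_{j-1}+n\tau)$ and write the scheme \eqref{FDscheme_for_FKE} as a linear update $U^{n+1}_{\cdot,j} = A\,U^{n}_{\cdot,j}$. Inserting the exact solution into the scheme yields, via the truncation computation \eqref{local-truncation-error} (which is legitimate because Asm.1--Asm.3 give enough smoothness of $\widetilde u_j$ for the Taylor expansions), the identity $\widetilde{u}_j^{\,n+1} = A\,\widetilde{u}_j^{\,n} + \tau\,T^n_{\cdot,j}$ with $\|T^n_{\cdot,j}\|_\infty = \mathcal{O}(\tau+h^2)$, so that $E^{n+1} = A E^n - \tau T^n$. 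The whole of \eqref{FDtopathwiseDMZconvergence1} then reduces to controlling $\|A\|_\infty$ and summing the truncation contributions.

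The heart of the proof is a discrete maximum principle for $A$. Reading off the coefficients of $U^n_{\textbf{l}\pm\textbf{e}_i,j}$ gives $\tau(\tfrac{1}{2h^2}\mp\tfrac{(f_i)_\textbf{l}}{2h})$, nonnegative exactly when $h\,|(f_i)_\textbf{l}|\le 1$; since $|f_i|\le C_f$ by Asm.2, the restriction $h<\tfrac{1}{C_f}$ secures this. The diagonal entry $1-\tau(\tfrac{d}{h^2}+\tfrac{1}{2h}\sum_i((f_i)_{\textbf{l}+\textbf{e}_i}-(f_i)_{\textbf{l}-\textbf{e}_i})+\tfrac12(\textbf{h}^T\textbf{h})_\textbf{l})$ is nonnegative under $\tau<(\tfrac{d}{h^2}+dL_f+\tfrac{d}{2}C_h^2)^{-1}$, using the Lipschitz bound $|(f_i)_{\textbf{l}+\textbf{e}_i}-(f_i)_{\textbf{l}-\textbf{e}_i}|\le 2hL_f$ from Asm.2 and $\textbf{h}^T\textbf{h}\le dC_h^2$ from Asm.3. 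The key observation is that the off-diagonal entries sum to $\tau\,d/h^2$, the first-order $f_i$ contributions cancelling in pairs, so the row sum of $A$ equals $1-\tau(\tfrac{1}{2h}\sum_i((f_i)_{\textbf{l}+\textbf{e}_i}-(f_i)_{\textbf{l}-\textbf{e}_i})+\tfrac12(\textbf{h}^T\textbf{h})_\textbf{l})\le 1+\tau dL_f$, the last step again by the Lipschitz bound and the sign-definiteness of $\textbf{h}^T\textbf{h}$. Nonnegativity of all entries then gives
\[
\|E^{n+1}\|_\infty\le(1+\tau dL_f)\|E^n\|_\infty+\tau\|T^n\|_\infty .
\]
Iterating over $n=0,\dots,\tfrac{\Delta T}{\tau}$, bounding $(1+\tau dL_f)^{\Delta T/\tau}\le e^{\Delta T dL_f}$ and $\tau\sum_k(1+\tau dL_f)^k\le\tfrac{e^{\Delta T dL_f}-1}{dL_f}$, and absorbing the $\tau,h$-independent factor $e^{\Delta T dL_f}-1$ into $C$ gives \eqref{FDtopathwiseDMZconvergence1}.

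For the global estimate \eqref{FDtopathwiseDMZconvergence2} I would chain the one-interval bound across $j=1,\dots,N_t$. The initial datum of interval $j$ is produced from the terminal value of interval $j-1$ by the discrete exponential transformation \eqref{PathwiseRobustDMZ_expotransf}; by Proposition \ref{YauYau:proposition2.1} the exact solutions obey $\widetilde{u}_j(\textbf{x},t_{j-1})=\exp(\textbf{h}^T(\textbf{x})(\textbf{y}_{t_{j-1}}-\textbf{y}_{t_{j-2}}))\,\widetilde{u}_{j-1}(\textbf{x},t_{j-1})$, so multiplying the terminal error pointwise by the same factor yields $\|U^0_{\cdot,j}-\widetilde{u}_j(\cdot,t_{j-1})\|_\infty\le e^{c_2}\,\|U^{\Delta T/\tau}_{\cdot,j-1}-\widetilde{u}_{j-1}(\cdot,t_{j-1})\|_\infty$, where $e^{c_2}$ comes from $|\textbf{h}^T(\textbf{y}_{t_{j-1}}-\textbf{y}_{t_{j-2}})|\le|2K|\le c_2$ in Asm.4. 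Writing $\mathcal{E}_j$ for the terminal error of interval $j$ and combining with \eqref{FDtopathwiseDMZconvergence1} gives the recursion $\mathcal{E}_j\le e^{c_2+\Delta T dL_f}\mathcal{E}_{j-1}+\tfrac{C}{dL_f}(\tau+h^2)$, with $\mathcal{E}_0=0$ since $\sigma_0$ is prescribed exactly on the grid. Summing the geometric series and using $N_t\Delta T=T$ to rewrite $(c_2+\Delta T dL_f)N_t=c_2N_t+TdL_f$ produces exactly \eqref{FDtopathwiseDMZconvergence2}.

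The main obstacle is the second paragraph: checking that the two stability conditions make $A$ entrywise nonnegative and, more delicately, that the convective coefficients cancel in the row sum so that only the $dL_f$ divergence term and the sign-definite zeroth-order term $-\tfrac12\textbf{h}^T\textbf{h}$ survive. Once this discrete maximum principle is in place, the truncation bound, the geometric summation, and the exponential-transformation chaining are all routine.
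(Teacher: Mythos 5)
Your proposal is correct and follows essentially the same route as the paper's proof: the positivity of the update coefficients under the two stability conditions (your discrete maximum principle, including the cancellation of the convective terms in the row sum so that only the $1+\tau dL_f$ factor survives), the geometric summation of truncation errors to get \eqref{FDtopathwiseDMZconvergence1}, and the interval-to-interval chaining through the exponential transformation with the $e^{c_2}$ factor from Asm.4 to get \eqref{FDtopathwiseDMZconvergence2}. The only difference is cosmetic: you phrase the stability step in matrix language ($\|A\|_\infty$ via nonnegativity and row sums), whereas the paper manipulates the coefficients of the error recursion directly.
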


\begin{proof}
Let $e^n_{\textbf{l},j}= U_{\textbf{l},j}^{n} - \widetilde{u}_j(\textbf{x}_\textbf{l},,t_{j-1}+n\tau)$ denote the error between FD solution and exact solution of the FKE \eqref{FKEinYauProposition} on $[t_{j-1},t_j]$. By the definition of the truncation error, we have
\begin{align*}
\frac{e^{n+1}_{\textbf{l},j} - e^n_{\textbf{l},j}}{\tau} &= \frac{1}{2}\frac{\sum_{i=1}^{d} (e^n_{\textbf{l}+\textbf{e}_i,j} + e^n_{\textbf{l}-\textbf{e}_i,j}) - (2d)e^n_{\textbf{l},j}}{h^2}
- \frac{\sum_{i=1}^d (f_i)_{\textbf{l}}\big(  e^n_{\textbf{l}+\textbf{e}_i,j} -  e^n_{\textbf{l}-\textbf{e}_i,j}\big)  }{2h} \nonumber \\
&- \frac{\sum_{i=1}^d\big(  (f_i)_{\textbf{l}+\textbf{e}_i} -  (f_i)_{\textbf{l}-\textbf{e}_i}\big)  e^n_{\textbf{l},j}}{2h} - \frac{1}{2}(\textbf{h}^T\textbf{h})_\textbf{l} e^n_{\textbf{l},j} + T^n_{\textbf{l},j},
\end{align*}
i.e.
\begin{align}\label{FD_errorofonestep}
e^{n+1}_{\textbf{l},j} &= \Big(1-\frac{d\tau}{h^2} - \tau\sum_{i=1}^d\frac{\big(  (f_i)_{\textbf{l}+\textbf{e}_i} -  (f_i)_{\textbf{l}-\textbf{e}_i}\big)  }{2h}- \frac{\tau}{2}(\textbf{h}^T\textbf{h})_\textbf{l} \Big)e^{n}_{\textbf{l},j} \nonumber \\
&+ \sum_{i=1}^{d}  \Big(\frac{\tau}{2h^2}-\frac{\tau}{2h}(f_i)_{\textbf{l}}\Big)e^n_{\textbf{l}+\textbf{e}_i,j} + \sum_{i=1}^d \Big(\frac{\tau}{2h^2} + \frac{\tau}{2h}  (f_i)_{\textbf{l}}\Big) e^n_{\textbf{l}-\textbf{e}_i,j} + \tau T^n_{\textbf{l},j},
\end{align}
where $T^n_{\textbf{l},j}$ is the truncation error; see \eqref{local-truncation-error}. 
Let the maximum error at a time $t=t_{j-1}+n\tau$ denoted by
\begin{align}\label{FD_definitionofmaxerror}
	E_j^n := \max\{|e_{\textbf{l},j}^n|,  \textbf{l}=(l_1,l_2,\cdots,l_d), 1\leq l_i \leq N, i=1,...,d \}.
\end{align}
We introduce the stability conditions
\begin{align}
\frac{\tau}{2h^2}> \big|\frac{\tau}{2h}(f_i)_{\textbf{l}}\big|, \quad
1> \frac{d\tau}{h^2} + \tau\sum_{i=1}^d\frac{\big(  (f_i)_{\textbf{l}+\textbf{e}_i} -  (f_i)_{\textbf{l}-\textbf{e}_i}\big)  }{2h}+ \frac{\tau}{2}(\textbf{h}^T\textbf{h})_\textbf{l}.
\label{StabilityConditions}
\end{align}
According to Asm.2 and Asm.3, the stability conditions \eqref{StabilityConditions} require $h<\frac{1}{C_f}$ and $\tau<(\frac{d}{h^2}+dL_f+\frac{d}{2}C_h^2)^{-1}$, where $C_h$, $C_f$, $L_f$ are defined in Asm.2 and Asm.3, and $C$ is a generic constant in the truncation error. Under the 
stability conditions \eqref{StabilityConditions}, \eqref{FD_errorofonestep} implies that  
\begin{align}\label{E_jn+1n}
E_j^{n+1} \leq & \Big(1-\frac{d\tau}{h^2} - \tau\sum_{i=1}^d\frac{\big(  (f_i)_{\textbf{l}+\textbf{e}_i} -  (f_i)_{\textbf{l}-\textbf{e}_i}\big)  }{2h}- \frac{\tau}{2}(\textbf{h}^T\textbf{h})_\textbf{l} \Big) E_j^n \nonumber\\
& + \sum_{i=1}^{d}  \Big(\frac{\tau}{2h^2}-\frac{\tau}{2h}(f_i)_{\textbf{l}}\Big)E_j^n  + \sum_{i=1}^d \Big(\frac{\tau}{2h^2} + \frac{\tau}{2h}  (f_i)_{\textbf{l}}\Big) E_j^n  + \tau C(\tau+h^2)\nonumber\\
\leq & \Big(1 - \tau\sum_{i=1}^d\frac{\big(  (f_i)_{\textbf{l}+\textbf{e}_i} -  (f_i)_{\textbf{l}-\textbf{e}_i}\big)  }{2h}- \frac{\tau}{2}(\textbf{h}^T\textbf{h})_\textbf{l} \Big)E_j^n + \tau C(\tau+h^2),\nonumber\\
\leq & \Big(1 + \tau dL_f\Big)E_j^n + \tau C(\tau+h^2).
\end{align}
Hence, by using \eqref{E_jn+1n} recursively, we have the error estimate for the FKE between two observations from $t_{j-1}$ to $t_j$, 
\begin{align}
E_j^{\frac{\Delta T}{\tau}} \leq e^{\Delta TdL_f} E_j^0 + \frac{C}{dL_f}(\tau+h^2),
\label{E-DeltaT-tau}
\end{align}
which gives the estimate \eqref{FDtopathwiseDMZconvergence1}, i.e. the error estimate in the time interval $[t_{j-1},t_j]$.  Notice that the initial data for the FD scheme \eqref{FDscheme_for_FKE} is obtained by an exponential transform, which assimilates the new observation data into the predicted solution at $t=t_{j-1}$
computed on the time interval $[t_{j-2},t_{j-1}]$. Thus, the term $E^0_{j}$ satisfies 
\begin{align}
E^0_{j} \leq \exp(\textbf{h}^T\big(\textbf{y}_{t_{j-1}}-\textbf{y}_{t_{j-2}})\big)E^{\frac{\Delta T}{\tau}} _{j-1} \leq e^{c_2}E^{\frac{\Delta T}{\tau}} _{j-1}.
\label{E0}
\end{align}
Combining \eqref{E-DeltaT-tau} and \eqref{E0},  we obtain 
\begin{align}
	E^{\frac{\Delta T}{\tau}} _{j}   \leq e^{c_2+\Delta TdL_f}E^{\frac{\Delta T}{\tau}} _{j-1} +\frac{C}{dL_f}(\tau+h^2).
\end{align}
Recursively using the above estimate and the condition $E_1^0=0$, we have 
\begin{align}
E^{\frac{\Delta T}{\tau}} _{N_t} & \leq e^{(c_2+\Delta TdL_f)(N_t-1)}\Big(E^{\frac{\Delta T}{\tau}} _{1} +\frac{C(\tau+h^2)}{dL_f(e^{c_2+\Delta TdL_f}-1)}\Big),\nonumber \\
& \leq e^{(c_2+\Delta TdL_f)(N_t-1)}\Big(e^{\Delta TdL_f} E_1^0 + \frac{C(\tau+h^2)}{dL_f} + \frac{C(\tau+h^2)}{dL_f(e^{c_2+\Delta TdL_f}-1)}\Big), \nonumber \\
&  \leq\frac{Ce^{c_2N_t+TdL_f}}{dL_f(e^{c_2+\Delta TdL_f}-1)}(\tau+h^2),
\end{align}	
which gives the estimate \eqref{FDtopathwiseDMZconvergence2}. 
\end{proof}

\subsection{Convergence analysis for the QTT method}
\noindent
Finally, we analyze the error between the solutions obtained by using the QTT method and the FD method. Let $\epsilon_1$ denote a given precision in the construction of QTT-format and TT-rounding, and $\epsilon_2$ denote the error (in the sense of Frobenius norm) of operator $(\tau \textbf{A}+ \textbf{I})^{\frac{\Delta T}{\tau}}$ between the FD matrix and QTT-format approximation matrix, respectively. 

We first analyze the convergence of the QTT solution to the FD solution. Note that the QTT method gives approximate solutions only at time $t_j$, $j=0,1,\cdots,N_t$. Hence in the following analysis, 
let $U_{\textbf{l},j}^{0}$ denote the FD solutions at time $t_{j-1}$, $j=1,...,N_t$ after the exponential transformation and let $U_{\textbf{l},j}^{\frac{\Delta T}{\tau}}$ denote the FD solutions at time 
$t_j$, $j=1,...,N_t$ before the exponential transformation, respectively.
\begin{lemma}\label{lemma_of_TTconvergence}
Let $\reallywidehat{U}_{\textbf{l},j}^{0}$ denote the QTT solutions at time $t_{j-1}$, $j=1,...,N_t$ after the exponential transformation and let $\reallywidehat{U}_{\textbf{l},j}^{\frac{\Delta T}{\tau}}$ denote the QTT solutions at time $t_j$, $j=1,...,N_t$ before the exponential transformation, respectively. 	
We have the error estimate
	\begin{align} \label{QTTtoFDconvergence1}
	\big|\big|\reallywidehat{U}_{\textbf{l},j}^{\frac{\Delta T}{\tau}}-U_{\textbf{l},j}^{\frac{\Delta T}{\tau}}\big|\big|_2 \leq  (1+\epsilon_2)c_6\big|\big| \reallywidehat{U}_{\textbf{l},j}^{0} -U_{\textbf{l},j}^{0}\big|\big|_2 +\epsilon_2c_6 \big|\big|U_{\textbf{l},j}^{0}\big|\big|_2,
	\end{align}
	where $c_6=\big|\big|(\tau \textbf{A}+ \textbf{I})^{\frac{\Delta T}{\tau}}\big|\big|_2$. Finally, we have the error estimate at the final time
	\begin{small}
	\begin{align}\label{QTTtoFDconvergence2}
	&\big|\big|\reallywidehat{U}_{\textbf{l},N_t}^{\frac{\Delta T}{\tau}}-U_{\textbf{l},N_t}^{\frac{\Delta T}{\tau}}\big|\big|_2 \leq c_7^{N_t}\Big(\epsilon_1 \big|\big| U_{\textbf{l},0}^{\frac{\Delta T}{\tau}}\big|\big|_2 \Big)+ \sum_{j=1}^{N_t}c_7^{N_t-j}\Big((1+\epsilon_2)\epsilon_1c_6e^{c_2}\big|\big|U_{\textbf{l},j-1}^{\frac{\Delta T}{\tau}}\big|\big|_2+\epsilon_2c_6 \big|\big|U_{\textbf{l},j}^{0}\big|\big|_2\Big),
	\end{align}
	\end{small}
	where $c_7=(1+\epsilon_2)(1+\epsilon_1)c_6e^{c_2}$. 
\end{lemma}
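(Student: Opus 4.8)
The plan is to prove the two error estimates by tracking how the two sources of QTT error—the rounding precision $\epsilon_1$ and the operator approximation error $\epsilon_2$—propagate through one time step, then iterating the resulting recursion across the $N_t$ observation intervals. The structure deliberately mirrors the proof of Lemma~\ref{lemma_of_FDconvergence}: establish a one-step bound first (estimate \eqref{QTTtoFDconvergence1}), then chain it recursively to obtain the final-time bound \eqref{QTTtoFDconvergence2}.

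First I would derive the single-step estimate \eqref{QTTtoFDconvergence1}. Within the interval $[t_{j-1},t_j]$ the FD solution advances by $U_{\textbf{l},j}^{\frac{\Delta T}{\tau}}=(\tau\textbf{A}+\textbf{I})^{\frac{\Delta T}{\tau}}U_{\textbf{l},j}^{0}$, whereas the QTT solution applies the \emph{approximate} operator $\reallywidehat{(\tau\textbf{A}+\textbf{I})^{\frac{\Delta T}{\tau}}}$ to the QTT input $\reallywidehat{U}_{\textbf{l},j}^{0}$. The plan is to insert and subtract an intermediate term, writing the difference as
\begin{align*}
\reallywidehat{U}_{\textbf{l},j}^{\frac{\Delta T}{\tau}}-U_{\textbf{l},j}^{\frac{\Delta T}{\tau}} = (\tau\textbf{A}+\textbf{I})^{\frac{\Delta T}{\tau}}\big(\reallywidehat{U}_{\textbf{l},j}^{0}-U_{\textbf{l},j}^{0}\big) + \big(\reallywidehat{(\tau\textbf{A}+\textbf{I})^{\frac{\Delta T}{\tau}}}-(\tau\textbf{A}+\textbf{I})^{\frac{\Delta T}{\tau}}\big)\reallywidehat{U}_{\textbf{l},j}^{0},
\end{align*}
so the first summand is controlled by the operator norm $c_6=\|(\tau\textbf{A}+\textbf{I})^{\frac{\Delta T}{\tau}}\|_2$ and the second by the operator error, whose relative size is $\epsilon_2$ in the Frobenius norm. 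Bounding $\|\reallywidehat{U}_{\textbf{l},j}^{0}\|_2\le\|U_{\textbf{l},j}^{0}\|_2+\|\reallywidehat{U}_{\textbf{l},j}^{0}-U_{\textbf{l},j}^{0}\|_2$ and collecting the $\epsilon_2 c_6$ factors then yields \eqref{QTTtoFDconvergence1}.

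Next I would build the recursion for the final-time bound. The key additional ingredients are (i) the TT-rounding applied after the matrix--vector multiplication, which injects a relative error of size $\epsilon_1$ at each step, and (ii) the exponential-transformation (Hadamard) step passing from $\reallywidehat{U}_{\textbf{l},j}^{\frac{\Delta T}{\tau}}$ to $\reallywidehat{U}_{\textbf{l},j+1}^{0}$, which is itself rounded and whose multiplier is bounded by $e^{c_2}$ via Asm.4 exactly as in \eqref{E0}. Composing the propagation factor $(1+\epsilon_2)c_6$ from \eqref{QTTtoFDconvergence1}, the rounding inflation $(1+\epsilon_1)$, and the transformation bound $e^{c_2}$ produces the per-step amplification constant $c_7=(1+\epsilon_2)(1+\epsilon_1)c_6e^{c_2}$, while the fresh error committed at step $j$ contributes the inhomogeneous term $(1+\epsilon_2)\epsilon_1 c_6 e^{c_2}\|U_{\textbf{l},j-1}^{\frac{\Delta T}{\tau}}\|_2+\epsilon_2 c_6\|U_{\textbf{l},j}^{0}\|_2$. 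Unrolling this linear recurrence over $j=1,\dots,N_t$, starting from the initial rounding error $\epsilon_1\|U_{\textbf{l},0}^{\frac{\Delta T}{\tau}}\|_2$, gives the geometric-sum form \eqref{QTTtoFDconvergence2}.

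The main obstacle is bookkeeping the two distinct error mechanisms cleanly so that each step contributes exactly the claimed factors and no cross terms are double-counted: the operator-approximation error $\epsilon_2$ enters multiplicatively through the propagator, while the rounding error $\epsilon_1$ is injected additively after both the multiplication and the Hadamard transformation, and the two compound across the interface between consecutive intervals. Care is needed to verify that the rounding after the exponential transformation and the rounding after the solve combine into a single $(1+\epsilon_1)$ factor per step rather than two, and that the $e^{c_2}$ bound is applied to the correct quantity. Once the one-step amplification $c_7$ and the one-step source term are pinned down exactly, the recursion itself is the routine geometric summation that yields the stated result.
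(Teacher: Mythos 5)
Your proposal is correct and follows essentially the same route as the paper: the same insert-and-subtract decomposition bounding the propagation step by $c_6$ and the operator error by $\epsilon_2 c_6$ (giving \eqref{QTTtoFDconvergence1}), the same treatment of the exponential/Hadamard step with the $e^{c_2}$ bound from Asm.4 and relative rounding error $\epsilon_1$, and the same unrolling of the resulting linear recurrence with amplification factor $c_7$ and initial conversion error $\epsilon_1\big|\big|U_{\textbf{l},0}^{\frac{\Delta T}{\tau}}\big|\big|_2$. The bookkeeping concern you flag is resolved exactly as in the paper: the single $(1+\epsilon_1)$ factor arises from the QTT approximation of the exponential vector in the assimilation step, and no additional rounding factor is charged to the matrix--vector solve.
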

\begin{proof}
Notice that the online procedure, i.e., the Algorithm \ref{alg:OnlineStageTT}, is divided into two main parts of computations. The first part is assimilating the observation data into the predicted solution with a TT-rounding procedure afterward. By using the triangular inequality and stability of the FD scheme, we easily obtain 
\begin{align}
\big|\big|\reallywidehat{U}_{\textbf{l},j+1}^{0}-U_{\textbf{l},j+1}^{0}\big|\big|_{2} \leq & \big|\big|\reallywidehat{\exp[\textbf{h}^T_{j}(\textbf{y}_{t_{j}}-\textbf{y}_{t_{j-1}})]}\reallywidehat{U}_{\textbf{l},j}^{\frac{\Delta T}{\tau}} - \exp[\textbf{h}^T_{j}(\textbf{y}_{t_{j}}-\textbf{y}_{t_{j-1}})]U_{\textbf{l},j}^{\frac{\Delta T}{\tau}} \big|\big|_2, \nonumber \\
\leq& \big|\big|\exp[\textbf{h}^T_{j}(\textbf{y}_{t_{j}}-\textbf{y}_{t_{j-1}})] \big|\big|_2 \Big(\big|\big| \reallywidehat{U}_{\textbf{l},j}^{\frac{\Delta T}{\tau}} -U_{\textbf{l},j}^{\frac{\Delta T}{\tau}}\big|\big|_2  \Big)  \nonumber \\
&+\Big(\big|\big|\exp[\textbf{h}^T_{j}(\textbf{y}_{t_{j}}-\textbf{y}_{t_{j-1}})]- \reallywidehat{\exp[\textbf{h}^T_{j}(\textbf{y}_{t_{j}}-\textbf{y}_{t_{j-1}})]} \big|\big|_2\Big)\big|\big|\reallywidehat{U}_{\textbf{l},j}^{\frac{\Delta T}{\tau}}\big|\big|_2, \nonumber\\
\leq &e^{c_2}\big|\big| \reallywidehat{U}_{\textbf{l},j}^{\frac{\Delta T}{\tau}} -U_{\textbf{l},j}^{\frac{\Delta T}{\tau}}\big|\big|_2 + \epsilon_1e^{c_2}(\big|\big|\reallywidehat{U}_{\textbf{l},j}^{\frac{\Delta T}{\tau}}-U_{\textbf{l},j}^{\frac{\Delta T}{\tau}}\big|\big|_2 + \big|\big|U_{\textbf{l},j}^{\frac{\Delta T}{\tau}}\big|\big|_2), \nonumber\\
\leq &(1+\epsilon_1)e^{c_2}\big|\big| \reallywidehat{U}_{\textbf{l},j}^{\frac{\Delta T}{\tau}} -U_{\textbf{l},j}^{\frac{\Delta T}{\tau}}\big|\big|_2 + \epsilon_1e^{c_2}\big|\big|U_{\textbf{l},j}^{\frac{\Delta T}{\tau}}\big|\big|_2, 
\label{estimate-part1} 
\end{align}
where $\reallywidehat{\exp[\textbf{h}^T_{j}(\textbf{y}_{t_{j}}-\textbf{y}_{t_{j-1}})]}$ denotes the QTT-format of the vector $\exp[\textbf{h}^T_{j}(\textbf{y}_{t_{j}}-\textbf{y}_{t_{j-1}})]$.

The second part of computation is a matrix-vector multiplication in the QTT-format with a TT-rounding procedure. By using the triangular inequality and the fact $\big|\big|\textbf{M}\big|\big|_2\leq\big|\big|\textbf{M}\big|\big|_F$ for any matrix $\textbf{M}$, we easily obtain
\begin{align} 
&\big|\big|\reallywidehat{U}_{\textbf{l},j+1}^{\frac{\Delta T}{\tau}}-U_{\textbf{l},j+1}^{\frac{\Delta T}{\tau}}\big|\big|_2 \leq  \big|\big|\reallywidehat{(\tau \textbf{A}+ \textbf{I})^{\frac{\Delta T}{\tau}}}\reallywidehat{U}_{\textbf{l},j+1}^{0} - (\tau \textbf{A}+ \textbf{I})^{\frac{\Delta T}{\tau}}U_{\textbf{l},j+1}^{0}\big|\big|, \nonumber  \\
\leq & \big|\big|(\tau \textbf{A}+ \textbf{I})^{\frac{\Delta T}{\tau}}\big|\big|_2 \Big(\big|\big| \reallywidehat{U}_{\textbf{l},j+1}^{0} -U_{\textbf{l},j+1}^{0}\big|\big|_2 \Big) + \Big(\big|\big|(\tau \textbf{A}+ \textbf{I})^{\frac{\Delta T}{\tau}} - \reallywidehat{(\tau \textbf{A}+ \textbf{I})^{\frac{\Delta T}{\tau}}}\big|\big|_2 \Big) \big|\big|\reallywidehat{U}_{\textbf{l},j+1}^{0}\big|\big|_2, \nonumber\\
\leq & c_6\big|\big| \reallywidehat{U}_{\textbf{l},j+1}^{0} -U_{\textbf{l},j+1}^{0}\big|\big|_2 + \epsilon_2c_6\Big(\big|\big| \reallywidehat{U}_{\textbf{l},j+1}^{0} -U_{\textbf{l},j+1}^{0}\big|\big|_2 + \big|\big|U_{\textbf{l},j+1}^{0}\big|\big|_2\Big), \nonumber\\
\leq & (1+\epsilon_2)c_6\big|\big| \reallywidehat{U}_{\textbf{l},j+1}^{0} -U_{\textbf{l},j+1}^{0}\big|\big|_2 +\epsilon_2c_6 \big|\big|U_{\textbf{l},j+1}^{0}\big|\big|_2, 
\label{estimate-part2}
\end{align}
where $\reallywidehat{(\tau \textbf{A}+ \textbf{I})^{\frac{\Delta T}{\tau}}}$ is the QTT-format of the operator matrix $(\tau \textbf{A}+ \textbf{I})^{\frac{\Delta T}{\tau}}$ and we have denoted $c_6=\big|\big|(\tau \textbf{A}+ \textbf{I})^{\frac{\Delta T}{\tau}}\big|\big|_2$. The estimate \eqref{QTTtoFDconvergence1} is proved. 

Combining the above two estimates \eqref{estimate-part1} and \eqref{estimate-part2}, we get 
\begin{align} 
\big|\big|\reallywidehat{U}_{\textbf{l},j+1}^{\frac{\Delta T}{\tau}}-U_{\textbf{l},j+1}^{\frac{\Delta T}{\tau}}\big|\big|_2 \leq & (1+\epsilon_2)(1+\epsilon_1)c_6e^{c_2}\big|\big|\reallywidehat{U}_{\textbf{l},j}^{\frac{\Delta T}{\tau}}-U_{\textbf{l},j}^{\frac{\Delta T}{\tau}}\big|\big|_2 \nonumber \\ 
&+(1+\epsilon_2)\epsilon_1c_6e^{c_2}\big|\big|U_{\textbf{l},j}^{\frac{\Delta T}{\tau}}\big|\big|_2+\epsilon_2c_6 \big|\big|U_{\textbf{l},j+1}^{0}\big|\big|_2.
\label{estimate-combine}
\end{align}
We denote $c_7=(1+\epsilon_2)(1+\epsilon_1)c_6e^{c_2}$ for notational simplicity. Recursively using the above estimate \eqref{estimate-combine}, we obtain 
\begin{align}
\big|\big|\reallywidehat{U}_{\textbf{l},N_t}^{\frac{\Delta T}{\tau}}-U_{\textbf{l},N_t}^{\frac{\Delta T}{\tau}}\big|\big|_2 &\leq c_7^{N_t}\Big( \big|\big|\reallywidehat{U}_{\textbf{l},0}^{\frac{\Delta T}{\tau}}-U_{\textbf{l},0}^{\frac{\Delta T}{\tau}}\big|\big|_2 \Big)
+\sum_{j=1}^{N_t}c_7^{N_t-j}\Big((1+\epsilon_2)\epsilon_1c_6e^{c_2}\big|\big|U_{\textbf{l},j-1}^{\frac{\Delta T}{\tau}}\big|\big|_2+\epsilon_2c_6 \big|\big|U_{\textbf{l},j}^{0}\big|\big|_2\Big),\nonumber\\
&\leq c_7^{N_t}\Big(\epsilon_1 \big|\big| U_{\textbf{l},0}^{\frac{\Delta T}{\tau}}\big|\big|_2 \Big) +\sum_{j=1}^{N_t}c_7^{N_t-j}\Big((1+\epsilon_2)\epsilon_1c_6e^{c_2}\big|\big|U_{\textbf{l},j-1}^{\frac{\Delta T}{\tau}}\big|\big|_2+\epsilon_2c_6 \big|\big|U_{\textbf{l},j}^{0}\big|\big|_2\Big), 
\end{align} 
which completes the proof.
\end{proof}
\begin{remark}
The estimate \eqref{QTTtoFDconvergence2} reveals the dependence of the error of the QTT solution on different parameters. Since $\big|\big|U_{\cdot,j}^{\frac{\Delta T}{\tau}}\big|\big|_2$, $j=0,....,N_t$ are bounded, 
given parameters $c_6$, $c_7$, and $N_t$, one can choose $\epsilon_1$ and $\epsilon_2$ accordingly so that the 
error $\big|\big|\reallywidehat{U}_{\textbf{l},N_t}^{\frac{\Delta T}{\tau}}-U_{\textbf{l},N_t}^{\frac{\Delta T}{\tau}}\big|\big|_2$ is small. 
\end{remark}

As an immediate result of Lemma \ref{lemma_of_FDconvergence} and Lemma \ref{lemma_of_TTconvergence}, we can estimate the error between the solution of the QTT method and the exact solution for the FKE \eqref{FKEinYauProposition} on each time interval $[t_{j-1},t_j]$. We provide such an estimate in the following lemma, of which the proof is a simple application of the triangular inequality. 
\begin{lemma}\label{lemma_of_TTconvergenceExact}
Let $\reallywidehat{U}_{\textbf{l},j}^{\frac{\Delta T}{\tau}}$ denote the solution of the QTT method and $\widetilde{u}_j(\textbf{x}_\textbf{l},t_{j})$ denote the exact solution for FKE \eqref{FKEinYauProposition} 
at time $t_j$, $j=1,...,N_t$, respectively. We have the error estimate
\begin{align} \label{TTconvergenceExact}
\big|\big|\reallywidehat{U}_{\textbf{l},j}^{\frac{\Delta T}{\tau}}-\widetilde{u}_j(\textbf{x}_\textbf{l},t_{j})\big|\big|_\infty
\leq & (\epsilon_1+\epsilon_1\epsilon_2+\epsilon_2)c_6 \big|\big|U_{\textbf{l},j}^{0}\big|\big|_2 +\frac{C(\tau+h^2)}{dL_f}.  
\end{align}
\end{lemma}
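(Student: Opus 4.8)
The plan is to obtain \eqref{TTconvergenceExact} by inserting the finite-difference solution $U_{\textbf{l},j}^{\frac{\Delta T}{\tau}}$ as an intermediary between the QTT solution and the exact solution, and then controlling the two resulting pieces with Lemma \ref{lemma_of_TTconvergence} and Lemma \ref{lemma_of_FDconvergence}, respectively. The first step is the triangle inequality
\[
\big|\big|\reallywidehat{U}_{\textbf{l},j}^{\frac{\Delta T}{\tau}}-\widetilde{u}_j(\textbf{x}_\textbf{l},t_{j})\big|\big|_\infty \leq \big|\big|\reallywidehat{U}_{\textbf{l},j}^{\frac{\Delta T}{\tau}}-U_{\textbf{l},j}^{\frac{\Delta T}{\tau}}\big|\big|_\infty + \big|\big|U_{\textbf{l},j}^{\frac{\Delta T}{\tau}}-\widetilde{u}_j(\textbf{x}_\textbf{l},t_{j})\big|\big|_\infty,
\]
which cleanly separates the QTT-versus-FD error from the FD-versus-exact error on the interval $[t_{j-1},t_j]$.

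For the second term I would invoke the per-interval finite-difference estimate \eqref{FDtopathwiseDMZconvergence1} from Lemma \ref{lemma_of_FDconvergence}. Since this local estimate is taken with the FD scheme initialized by the exact value $\widetilde{u}_j(\textbf{x}_\textbf{l},t_{j-1})$, the initial-error term $\|U_{\textbf{l},j}^{0}-\widetilde{u}_j(\textbf{x}_\textbf{l},t_{j-1})\|_\infty$ vanishes and \eqref{FDtopathwiseDMZconvergence1} collapses to $\frac{C}{dL_f}(\tau+h^2)$, which is precisely the truncation contribution appearing on the right-hand side of \eqref{TTconvergenceExact}.

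For the first term I would use the single-interval QTT-to-FD estimate \eqref{QTTtoFDconvergence1}. Two elementary reductions are needed. First, because \eqref{QTTtoFDconvergence1} is stated in the Euclidean norm whereas \eqref{TTconvergenceExact} is in the max norm, I use the pointwise bound $\|\cdot\|_\infty\leq\|\cdot\|_2$ to pass from the $\infty$-norm of the QTT--FD discrepancy to its $2$-norm before applying \eqref{QTTtoFDconvergence1}. Second, the residual term $\|\reallywidehat{U}_{\textbf{l},j}^{0}-U_{\textbf{l},j}^{0}\|_2$ in \eqref{QTTtoFDconvergence1} measures the mismatch between the QTT and FD initializations at $t_{j-1}$; on a single interval this mismatch is exactly the TT-rounding applied to the exponentially-transformed data, so by the prescribed rounding accuracy it obeys the relative bound $\|\reallywidehat{U}_{\textbf{l},j}^{0}-U_{\textbf{l},j}^{0}\|_2\leq\epsilon_1\|U_{\textbf{l},j}^{0}\|_2$. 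Substituting this into \eqref{QTTtoFDconvergence1} gives $\big|\big|\reallywidehat{U}_{\textbf{l},j}^{\frac{\Delta T}{\tau}}-U_{\textbf{l},j}^{\frac{\Delta T}{\tau}}\big|\big|_2\leq (1+\epsilon_2)\epsilon_1 c_6\|U_{\textbf{l},j}^{0}\|_2+\epsilon_2 c_6\|U_{\textbf{l},j}^{0}\|_2=(\epsilon_1+\epsilon_1\epsilon_2+\epsilon_2)c_6\|U_{\textbf{l},j}^{0}\|_2$, matching the first term on the right of \eqref{TTconvergenceExact}.

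Adding the two bounds then yields \eqref{TTconvergenceExact}. The computation is routine; the only point requiring care, and what I regard as the main obstacle, is the bookkeeping of norms and initializations: one must justify the switch between $\|\cdot\|_2$ and $\|\cdot\|_\infty$, and, more importantly, articulate precisely why on a single interval the FD scheme may be taken with exact initial data while the QTT initialization differs from it only through a rounding of relative size $\epsilon_1$. Keeping these two conventions consistent is exactly what produces the clean cancellation into the coefficient $(\epsilon_1+\epsilon_1\epsilon_2+\epsilon_2)c_6\|U_{\textbf{l},j}^{0}\|_2$.
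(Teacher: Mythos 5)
Your proposal is correct and follows essentially the same route as the paper's proof: the triangle inequality through the FD solution, the estimate \eqref{FDtopathwiseDMZconvergence1} with exact initial data so that the initial-error term vanishes, and the estimate \eqref{QTTtoFDconvergence1} combined with $\|\cdot\|_\infty\leq\|\cdot\|_2$ and the rounding bound $\big|\big|\reallywidehat{U}_{\textbf{l},j}^{0}-U_{\textbf{l},j}^{0}\big|\big|_2\leq\epsilon_1\big|\big|U_{\textbf{l},j}^{0}\big|\big|_2$. The bookkeeping you highlight (norm switch and initialization conventions) is exactly what the paper does as well.
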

\begin{proof}
From the error estimate \eqref{FDtopathwiseDMZconvergence1} in Lemma \ref{lemma_of_FDconvergence} and 
the estimate \eqref{QTTtoFDconvergence1} in Lemma \ref{lemma_of_TTconvergence}, and the infinity norm of any vector is bounded by its 2-norm, we have
	\begin{align} 
	&\big|\big|\reallywidehat{U}_{\textbf{l},j}^{\frac{\Delta T}{\tau}}-\widetilde{u}_j(\textbf{x}_\textbf{l},t_{j})\big|\big|_\infty \leq  \big|\big|\reallywidehat{U}_{\textbf{l},j}^{\frac{\Delta T}{\tau}}-U_{\textbf{l},j}^{\frac{\Delta T}{\tau}}\big|\big|_\infty+||U_{\textbf{l},j}^{\frac{\Delta T}{\tau}} - \widetilde{u}_j(\textbf{x}_\textbf{l},t_{j})||_\infty\nonumber\\
	\leq & (1+\epsilon_2)c_6\big|\big| \reallywidehat{U}_{\textbf{l},j}^{0} -U_{\textbf{l},j}^{0}\big|\big|_2 +\epsilon_2c_6 \big|\big|U_{\textbf{l},j}^{0}\big|\big|_2
	+ e^{\Delta TdL_f}||U_{\textbf{l},j}^{0} - \widetilde{u}_j(\textbf{x}_\textbf{l},t_{j-1})||_\infty+\frac{C}{dL_f}(\tau+h^2).
	\end{align}
	Since at each initial time $t_{j-1}$ the FD method uses the exact initial data $\widetilde{u}_j(\textbf{x}_\textbf{l},t_{j-1})$ to compute, so we have $||U_{\textbf{l},j}^{0} - \widetilde{u}_j(\textbf{x}_\textbf{l},t_{j-1})||_\infty=0$. In addition, we have $\big|\big| \reallywidehat{U}_{\textbf{l},j}^{0} -U_{\textbf{l},j}^{0}\big|\big|_2\leq \epsilon_1\big|\big|U_{\textbf{l},j}^{0}\big|\big|_2$. Thus, the error estimate in \eqref{TTconvergenceExact} 
	is proved. 
\end{proof}

Now we are in a position to present the main result. Recall that $u(\textbf{x},t)$ denote the solution of pathwise robust DMZ equation \eqref{TheoremCeq1} on $\Omega\times[0,T]$, $u_{N_t}(\textbf{x},t)$ denote the solution of frozen time equation \eqref{TheoremCeq2} on $\Omega\times[t_{N_t-1},t_{N_t}]$. 
Let $\mathcal{I}$ denote an interpolation operator, which can be a polynomial interpolation or spline interpolation, 
and let $\reallywidehat{u}(\textbf{x},t_j)=\mathcal{I} \reallywidehat{U}_{\textbf{l},j}^{\frac{\Delta T}{\tau}}$ 
denote the recovered function on $\Omega$ based on the QTT solution.  
\begin{theorem}\label{QTTFianlConverenceResult}
The QTT solution converges to the solution of the pathwise robust DMZ equation on a bounded domain in the sense of $L^1$ norm. Specifically, we have the error estimate as follows, 
\begin{align}\label{finaltheoremFINAL}
\big|\big|&u(\textbf{x},T)-\exp(-\textbf{h}(\textbf{x})^T\textbf{y}_{t_{N_t-1}})\reallywidehat{u}(\textbf{x},t_{N_t})\big|\big|_{L^1}\leq \frac{2c_5}{\alpha+1}\frac{T^{\alpha+1}e^{c_3T}}{{N_t}^\alpha}\nonumber\\
&+ Ce^{c_2}\Big( \frac{Ce^{c_2N_t+TdL_f}}{dL_f(e^{c_2+\Delta TdL_f}-1)}(\tau+h^2) + h^2||\Delta \widetilde{u}_{N_t}(\textbf{x},T) ||_{L^2}\Big)\nonumber\\
&+Ce^{c_2}c_7^{N_t}\Big(\epsilon_1 \big|\big| U_{\textbf{l},0}^{\frac{\Delta T}{\tau}}\big|\big|_2 \Big)+Ce^{c_2}\sum_{j=1}^{N_t}c_7^{N_t-j}\Big((1+\epsilon_2)\epsilon_1c_6e^{c_2}\big|\big|U_{\textbf{l},j-1}^{\frac{\Delta T}{\tau}}\big|\big|_2+\epsilon_2c_6 \big|\big|U_{\textbf{l},j}^{0}\big|\big|_2\Big).
\end{align}
\end{theorem}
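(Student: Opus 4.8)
The plan is to establish \eqref{finaltheoremFINAL} by a chain of triangle inequalities that peels off, one source at a time, each of the three errors already controlled upstream: the time-freezing error (Proposition \ref{YauYau:theoremC}), the finite-difference error (Lemma \ref{lemma_of_FDconvergence}), and the QTT error (Lemma \ref{lemma_of_TTconvergence}), together with a standard interpolation estimate. First I would insert the frozen-time solution $u_{N_t}(\textbf{x},T)$ of \eqref{TheoremCeq2} and write
\begin{align*}
\big|\big|u(\cdot,T)-\exp(-\textbf{h}^T\textbf{y}_{t_{N_t-1}})\reallywidehat{u}(\cdot,t_{N_t})\big|\big|_{L^1}
&\leq \big|\big|u(\cdot,T)-u_{N_t}(\cdot,T)\big|\big|_{L^1} \\
&\quad + \big|\big|u_{N_t}(\cdot,T)-\exp(-\textbf{h}^T\textbf{y}_{t_{N_t-1}})\reallywidehat{u}(\cdot,t_{N_t})\big|\big|_{L^1}.
\end{align*}
The first term is precisely the quantity estimated in Proposition \ref{YauYau:theoremC}, which yields the leading term $\tfrac{2c_5}{\alpha+1}\tfrac{T^{\alpha+1}e^{c_3T}}{N_t^\alpha}$; here I would only need to check that the hypotheses on $\textbf{F}$ and $J$ (smoothness in $\textbf{x}$, H\"older continuity in $t$, and the bounds with constants $c_3,c_4$) follow from Asm.1--Asm.3 applied to $\textbf{f}$ and $\textbf{h}$.

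For the second term I would use the identity $u_{N_t}(\textbf{x},T)=\exp(-\textbf{h}^T\textbf{y}_{t_{N_t-1}})\widetilde{u}_{N_t}(\textbf{x},T)$ from Proposition \ref{YauYau:proposition2.1} (relation \eqref{u-equal-exputitle}) to factor out the exponential, and bound $\exp(-\textbf{h}^T\textbf{y}_{t_{N_t-1}})$ pointwise by $e^{c_2}$ using \eqref{bound-K} of Asm.4. This produces the common prefactor $Ce^{c_2}$ appearing on the last three lines of \eqref{finaltheoremFINAL} and reduces matters to estimating $\big|\big|\widetilde{u}_{N_t}(\cdot,T)-\reallywidehat{u}(\cdot,t_{N_t})\big|\big|_{L^1}$ with $\reallywidehat{u}=\mathcal{I}\reallywidehat{U}_{\textbf{l},N_t}^{\frac{\Delta T}{\tau}}$. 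I would split this once more,
\begin{align*}
\big|\big|\widetilde{u}_{N_t}-\mathcal{I}\reallywidehat{U}_{\textbf{l},N_t}^{\frac{\Delta T}{\tau}}\big|\big|_{L^1}
\leq \big|\big|\widetilde{u}_{N_t}-\mathcal{I}[\widetilde{u}_{N_t}]\big|\big|_{L^1}
+ \big|\big|\mathcal{I}\big(\widetilde{u}_{N_t}(\textbf{x}_\textbf{l},T)-\reallywidehat{U}_{\textbf{l},N_t}^{\frac{\Delta T}{\tau}}\big)\big|\big|_{L^1},
\end{align*}
where $\mathcal{I}[\widetilde{u}_{N_t}]$ is the interpolant of the exact grid values. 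The first summand is the interpolation error of a smooth function; the standard second-order interpolation estimate together with $\|\cdot\|_{L^1}\leq\sqrt{\mathop{Vol}(\Omega)}\,\|\cdot\|_{L^2}$ gives the term $h^2\big|\big|\Delta\widetilde{u}_{N_t}(\textbf{x},T)\big|\big|_{L^2}$.

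For the remaining discrete term I would decompose the grid error as
\begin{align*}
\widetilde{u}_{N_t}(\textbf{x}_\textbf{l},T)-\reallywidehat{U}_{\textbf{l},N_t}^{\frac{\Delta T}{\tau}}
=\big(\widetilde{u}_{N_t}(\textbf{x}_\textbf{l},T)-U_{\textbf{l},N_t}^{\frac{\Delta T}{\tau}}\big)
+\big(U_{\textbf{l},N_t}^{\frac{\Delta T}{\tau}}-\reallywidehat{U}_{\textbf{l},N_t}^{\frac{\Delta T}{\tau}}\big),
\end{align*}
bounding the first piece in $\ell^\infty$ by the FD estimate \eqref{FDtopathwiseDMZconvergence2} of Lemma \ref{lemma_of_FDconvergence} and the second in $\ell^2$ by the QTT estimate \eqref{QTTtoFDconvergence2} of Lemma \ref{lemma_of_TTconvergence}. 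Mapping these discrete bounds into $L^1(\Omega)$ through the stable linear operator $\mathcal{I}$ — using $\|\mathcal{I}\textbf{v}\|_{L^1}\lesssim\mathop{Vol}(\Omega)\|\textbf{v}\|_\infty$ for the FD piece and $\|\mathcal{I}\textbf{v}\|_{L^1}\lesssim\sqrt{\mathop{Vol}(\Omega)}\,h^{d/2}\|\textbf{v}\|_2$ for the QTT piece — and absorbing every volume and grid-conversion constant into the generic $C$, one recovers exactly the FD contribution $\tfrac{Ce^{c_2N_t+TdL_f}}{dL_f(e^{c_2+\Delta TdL_f}-1)}(\tau+h^2)$ and the QTT contributions on the last two lines of \eqref{finaltheoremFINAL}. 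This is, in effect, the single-interval content of Lemma \ref{lemma_of_TTconvergenceExact} carried through to $j=N_t$.

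The main obstacle I anticipate is not any individual estimate but the bookkeeping in moving between three different norms — the $\ell^\infty$ FD bound, the $\ell^2$ QTT bound, and the target continuum $L^1$ norm — through the interpolation operator. One must verify that $\mathcal{I}$ is $L^1$-stable and, crucially, that the grid-to-continuum factors ($h^d$, $h^{d/2}$, $\sqrt{\mathop{Vol}(\Omega)}$) are genuinely absorbable into a constant $C$ independent of $\tau$ and $h$, so the stated convergence rate is not silently degraded. Checking the hypotheses of Proposition \ref{YauYau:theoremC} from Asm.1--Asm.3 and keeping the $e^{c_2}$ prefactors consistent across the three peeled-off terms are the remaining points that require care.
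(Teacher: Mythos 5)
Your proposal is correct and follows essentially the same route as the paper's proof: the paper likewise splits the error via the triangle inequality into the time-freezing part (bounded by Proposition \ref{YauYau:theoremC}), the FD-versus-exact part plus interpolation error (bounded by Lemma \ref{lemma_of_FDconvergence} and the second-order interpolation estimate $h^2\|\Delta\widetilde{u}_{N_t}\|_{L^2}$), and the QTT-versus-FD part (bounded by Lemma \ref{lemma_of_TTconvergence}), using the identity \eqref{u-equal-exputitle} and the bound $e^{c_2}$ from Asm.4 exactly as you do. Your grouping of the terms differs only cosmetically, and your explicit tracking of the grid-to-continuum factors through $\mathcal{I}$ is a slightly more careful rendering of what the paper absorbs into the generic constant $C$.
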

\begin{proof}
By the triangle inequality, we split the error into three main parts
\begin{align}\label{Triinequality-SplitError}
\big|\big|&u(\textbf{x},T)-\exp(-\textbf{h}(\textbf{x})^T\textbf{y}_{t_{N_t-1}})\reallywidehat{u}(\textbf{x},t_{N_t})\big|\big|_{L^1} \nonumber \\
\leq & \big|\big|u(\textbf{x},T)-u_{N_t}(\textbf{x},T)\big|\big|_{L^1} + \big|\big|\exp(-\textbf{h}(\textbf{x})^T\textbf{y}_{t_{N_t-1}})\widetilde{u}_{N_t}(\textbf{x},T)-\exp(-\textbf{h}(\textbf{x})^T\textbf{y}_{t_{N_t-1}})\mathcal{I}U_{\textbf{l},N_t}^{\frac{\Delta T}{\tau}}\big|\big|_{L^1}
\nonumber \\ 
&+\big|\big|\exp(-\textbf{h}(\textbf{x})^T\textbf{y}_{t_{N_t-1}})\mathcal{I}U_{\textbf{l},N_t}^{\frac{\Delta T}{\tau}}-\exp(-\textbf{h}(\textbf{x})^T\textbf{y}_{t_{N_t-1}})\mathcal{I}\reallywidehat{U}_{\textbf{l},N_t}^{\frac{\Delta T}{\tau}}\big|\big|_{L^1}, \nonumber \\
:=& E_1 + E_2 + E_3, 
\end{align}
where we have used the condition $u_{N_t}(\textbf{x},T) = \exp(-\textbf{h}(\textbf{x})^T\textbf{y}_{t_{N_t-1}})\widetilde{u}_{N_t}(\textbf{x},T)$ (see Eq.\eqref{u-equal-exputitle}). 	In what follows, we shall estimate these three error terms separately. 

From Prop.\ref{YauYau:theoremC}, we know the error term $E_1$ satisfies   
\begin{align}\label{E1}
E_1=||u(\textbf{x},T)-u_{N_t}(\textbf{x},T)||_{L^1} \leq \frac{2c_5}{\alpha+1}\frac{T^{\alpha+1}e^{c_3T}}{{N_t}^\alpha}.
\end{align}
 
Then, we have the estimate for the error term $E_2$ as follows, 
\begin{align}\label{E2}
&E_2=||\exp(-\textbf{h}(\textbf{x})^T\textbf{y}_{t_{N_t-1}})\widetilde{u}_{N_t}(\textbf{x},T)-\exp(-\textbf{h}(\textbf{x})^T\textbf{y}_{t_{N_t-1}})\mathcal{I}U_{\textbf{l},N_t}^{\frac{\Delta T}{\tau}} ||_{L^1}\nonumber\\
\leq & e^{c_2}||\widetilde{u}_{N_t}(\textbf{x},T) - \mathcal{I}U_{\textbf{l},N_t}^{\frac{\Delta T}{\tau}} ||_{L^1}\nonumber\\
\leq & e^{c_2}\Big(||\mathcal{I}U_{\textbf{l},N_t}^{\frac{\Delta T}{\tau}} -\mathcal{I}\widetilde{u}_{N_t}(\textbf{x}_\textbf{l},T)||_{L^1} + ||\mathcal{I}\widetilde{u}_{N_t}(\textbf{x}_\textbf{l},T)-\widetilde{u}_{N_t}(\textbf{x},T) ||_{L^1}\Big)\nonumber\\
\leq & Ce^{c_2}\Big(||U_{\textbf{l},N_t}^{\frac{\Delta T}{\tau}} -\widetilde{u}_{N_t}(\textbf{x}_\textbf{l},T)||_{\infty} + h^2||\Delta\widetilde{u}_{N_t}(\textbf{x},T) ||_{L^2}\Big)\nonumber\\
\leq & Ce^{c_2}\Big( \frac{Ce^{c_2N_t+TdL_f}}{dL_f(e^{c_2+\Delta TdL_f}-1)}(\tau+h^2) + h^2||\Delta \widetilde{u}_{N_t}(\textbf{x},T) ||_{L^2}\Big)
\end{align}
where $C$ is a generic constant depending on $\Omega$. Here we have used the facts that the norm of the operator 
$\mathcal{I}$ is bounded and the interpolation error has second-order convergence rate,  since the function $\widetilde{u}_{N_t}(\textbf{x},T)$ is smooth. In addition, 
the estimate \eqref{FDtopathwiseDMZconvergence2} in Lemma \ref{lemma_of_FDconvergence} is used. 

Finally, we estimate the error term $E_3$ and get 
\begin{align}\label{E3}
&\big|\big|\exp(-\textbf{h}(\textbf{x})^T\textbf{y}_{t_{N_t-1}})\mathcal{I}\reallywidehat{U}_{\textbf{l},N_t}^{\frac{\Delta T}{\tau}}-\exp(-\textbf{h}(\textbf{x})^T\textbf{y}_{t_{N_t-1}})\mathcal{I}U_{\textbf{l},N_t}^{\frac{\Delta T}{\tau}}\big|\big|_{L^1}\nonumber\\
\leq & e^{c_2}\big|\big|\mathcal{I}\reallywidehat{U}_{\textbf{l},N_t}^{\frac{\Delta T}{\tau}}-\mathcal{I}U_{\textbf{l},N_t}^{\frac{\Delta T}{\tau}}\big|\big|_{L^1} 
\leq  Ce^{c_2}\big|\big|\reallywidehat{U}_{\textbf{l},N_t}^{\frac{\Delta T}{\tau}}-U_{\textbf{l},N_t}^{\frac{\Delta T}{\tau}}\big|\big|_{2}\nonumber\\
\leq & Ce^{c_2}c_7^{N_t}\Big(\epsilon_1 \big|\big| U_{\textbf{l},0}^{\frac{\Delta T}{\tau}}\big|\big|_2 \Big)+ 
Ce^{c_2}\sum_{j=1}^{N_t}c_7^{N_t-j}\Big((1+\epsilon_2)\epsilon_1c_6e^{c_2}\big|\big|U_{\textbf{l},j-1}^{\frac{\Delta T}{\tau}}\big|\big|_2+\epsilon_2c_6 \big|\big|U_{\textbf{l},j}^{0}\big|\big|_2\Big)
\end{align}
where $C$ is a constant depending on $\Omega$ and the estimate \eqref{QTTtoFDconvergence2} in Lemma \ref{lemma_of_TTconvergence} is used.  Combining above formulas \eqref{E1}\eqref{E2}\eqref{E3}, 
we prove the statement in the theorem \ref{QTTFianlConverenceResult}. 
\end{proof}
\begin{remark}
The accuracy of the QTT method in computing the the solution of pathwise robust DMZ equation is
controlled by three components of approximation errors, i.e. $E_1$, $E_2$, and $E_3$. 
In practice, we can choose $N_t$,  $\tau$, $h$, $\epsilon_1$ and $\epsilon_2$ accordingly so that 
the error $\big|\big|u(\textbf{x},T)-\exp(-\textbf{h}(\textbf{x})^T\textbf{y}_{t_{N_t-1}})\reallywidehat{u}(\textbf{x},t_{N_t})\big|\big|_{L^1}$ is small.
\end{remark}


\section{Numerical results}\label{sec:Numerical_example}
\noindent 
In this section, we are interested in investigating the accuracy and efficiency of our method in solving the NLF problems. We shall carry out numerical experiments on two 3D NLF problems. The definitions of these two NLF problems are given as follows, 

 
\underline{Example 1: An almost linear problem} This problem is modeled by a SDE in the Ito form as follows,
\begin{align}\label{NonlinearSignalModel_Experiment1}
\begin{cases}
dx_{1} = -0.3x_1 + dv_1, \\
dx_{2} = -0.3x_2 + dv_2, \\
dx_{3} = -0.3x_3 + dv_3, \\
dy_{1} = (x_2+\sin(x_1))dt +   dw_1, \\
dy_{2} = (x_3+\sin(x_2))dt +   dw_2, \\
dy_{3} = (x_1+\sin(x_3))dt +   dw_3. \\
\end{cases}
\end{align}
where $E[d\vec{v}_td\vec{v}_t^{T}]= 1.5I_3dt$ with $\vec{v}=[v_1,v_2,v_3]^T$, $E[d\vec{w}_td\vec{w}_t^T]=I_3 dt$ with $\vec{w}=[w_1,w_2,w_3]^T$, and $I_3$ is the identity matrix of size $3\times3$. The noise are independent Brownian motions. The initial state is $\vec{x}(0)=[x_1(0),x_2(0),x_3(0)]^T=[0,0,0]^T$ with $\vec{x}(t)=[x_1(t),x_2(t),x_3(t)]^T$. 

\underline{Example 2: A cubic sensor problem} This problem is modeled by a SDE in the Ito form as follows,
\begin{align}\label{NonlinearSignalModel_Experiment2}
\begin{cases}
dx_{1} = (-0.6x_1- 0.1x_2)dt + dv_1, \\
dx_{2} = (-0.5x_2+0.1x_3)dt + dv_2, \\
dx_{3} = (-0.6x_3+0.1x_1)dt + dv_3, \\
dy_{1} = x_2^3dt +   dw_1, \\
dy_{2} = x_3^3dt +   dw_2, \\
dy_{3} = x_1^3dt +   dw_3, \\
\end{cases}
\end{align}
where $E[d\vec{v}_td\vec{v}_t^{T}]= 1.5I_3dt$ with $\vec{v}=[v_1,v_2,v_3]^T$, $E[d\vec{w}_td\vec{w}_t^T]=I_3dt$ with $\vec{w}=[w_1,w_2,w_3]^T$, $I_3$ is the identity matrix of size $3\times3$. The initial state is $\vec{x}(0)=[x_1(0),x_2(0),x_3(0)]^T=[0,0,0]^T$ with $\vec{x}(t)=[x_1(t),x_2(t),x_3(t)]^T$. 

The total experimental time is $T=20s$ for both examples. The cubic sensor problem has higher nonlinearity than the almost linear one. Thus, it is more difficult. 

\subsection{QTT-ranks in the spatial discretization}
\noindent
We consider the spatial discretization of operators on the 3D domain in the QTT-format. We have shown that the exact QTT-decomposition of the discretized Laplace operator via standard FD scheme and first partial derivative operator via central difference scheme have low QTT-ranks. Hence, we mainly compute 
the QTT-ranks of the discretization of velocity field $\vec{f}(\vec{x})=[f_1(\vec{x}),f_2(\vec{x}),f_3(\vec{x})]^T$ and function $\vec{h}^T\textbf{S}^{-1}\vec{h}$. 

Let us define the effective QTT-rank of a QTT-format as
\begin{align} r_{eff}:=\frac{\sqrt{(r_0n_1+r_dn_d)^2+4(\sum_{k=2}^{d-1}n_k)\sum_{k=1}^{d}r_{k-1}n_kr_k}-(r_0n_1+r_dn_d)}{2\sum_{k=2}^{d-1}n_k}, 
\label{effectiveQTTranks}
\end{align} 
where $n_k,r_k$ representing mode sizes and QTT-ranks. Notice that $n_k=2, k=1,\cdots,d,$ in the QTT-format case.
In Table \ref{tab:erank1} and Table \ref{tab:erank2}, we show the effective QTT-rank for the Example 1 and Example 2, respectively. We observe a very slow growth in the effective QTT-rank with respect to the degree of freedom in the spatial discretization.
\begin{table}[htbp]
	\begin{center}
		\begin{tabular}{|c|cccc|}
			\hline
		        $N$ on each direction &   $f_1(\vec{x})$ & $f_2(\vec{x})$ & $f_3(\vec{x})$ & $\vec{h}^T\textbf{S}^{-1}\vec{h}$\\
			\hline 
			 $2^4$  &  1.32 & 1.32 & 1.32 &  4.77 \\
			 \hline
			 $2^5$   &  1.34 & 1.34 & 1.34 &   5.41\\	
			 \hline
			 $2^6$ & 2.20 & 1.35 & 2.04 &  5.82 \\
			 \hline
			 $2^7$ & 2.23 & 2.34 & 2.29 &  6.08\\
			 \hline
			 $2^8$  & 2.40 & 2.35 & 2.30& 6.27\\
			\hline
		\end{tabular}%
	\end{center}
	\caption{Effective QTT-rank of the discretized functions on the spatial grid with a given precision 
		$1\times 10^{-12}$ in the almost linear problem.}
	\label{tab:erank1}
\end{table}%

\begin{table}[htbp]
	\begin{center}
		\begin{tabular}{|c|cccc|}
			\hline
		        $N$ on each direction &   $f_1(\vec{x})$ & $f_2(\vec{x})$ & $f_3(\vec{x})$ & $\vec{h}^T\textbf{S}^{-1}\vec{h}$\\
			\hline 
			$2^4$ &1.69 & 1.69 & 2.00 &3.03  \\	
			\hline
			 $2^5$ &1.70 & 1.70 & 2.00 &3.53   \\
			 \hline	
			 $2^6$ & 1.71 & 1.71 & 2.00   &4.27\\
			 \hline
			 $2^7$ & 2.65 & 2.65 & 2.93  &4.80\\
			 \hline
			$2^8$ & 2.66 & 2.63 & 2.94  &5.15\\
			\hline
		\end{tabular}%
	\end{center}
	\caption{Effective QTT-rank of the discretized functions on the spatial grid with a given precision 
		$1\times 10^{-12}$ in the cubic sensor problem.}
	\label{tab:erank2}
\end{table}%

In Table \ref{tab:erank3} and Table \ref{tab:erank4}, we show the effective QTT-ranks of assembled tensors $(\tau \textbf{A}+\textbf{I})$ and $(\tau \textbf{A}+\textbf{I})^{\frac{\Delta T}{\tau}}$ that are pre-computed in the offline procedure; see Eq.\eqref{AddingOperators_FKE} and Eq.\eqref{AssemblingOperators_FKE}. 
In our experiments, we set $\Delta T = 0.05$ and $\frac{\Delta T}{\tau}=100$ in Example 1 ($\frac{\Delta T}{\tau}=200$ in Example 2). Recall the $\Delta T$ is the time between two observations and $\tau$ is the time step in discretizing the FKE \eqref{PathwiseRobustDMZ_KFE}. The time step $\tau$ in Example 1 and Example 2 is chosen in such a way that the Courant-Friedrichs-Lewy (CFL) stability condition is satisfied \cite{morton2005numerical}. 
Note that the requirement of a small time step $\tau$ makes the FD method expensive for solving high-dimensional and/or nonlinear problems. While in our QTT method, the trouble caused by small $\tau$ is avoided  
since we can compute $(\tau \textbf{A}+\textbf{I})^{\frac{\Delta T}{\tau}}$ and approximated it using the 
QTT method in the offline procedure. Moreover, we can prove that the accuracy of $(\tau \textbf{A}+\textbf{I})^{\frac{\Delta T}{\tau}}$ is bounded by $\frac{\Delta T}{\tau}\epsilon$, if the TT-rounding precision $\epsilon$ is given. 

\begin{table}[htbp]
	\begin{center}
		\begin{tabular}{|c|c|c|}
			\hline
			spatial $N$ on each direction  &  Example 1 & Example 2 \\
			\hline
			$2^4$  &15.56 & 15.42 \\
			\hline	
			 $2^5$ &16.65 & 16.31  \\
			 \hline	
			 $2^6$ & 19.56&    17.25\\
			 \hline
			 $2^7$ & 22.17 &    22.37\\
			 \hline
			 $2^8$  &22.96 &   22.87 \\	
			\hline
		\end{tabular}%
	\end{center}
	\caption{Effective QTT-rank of the assembled operator $(\tau \textbf{A}+\textbf{I})$ with a given precision $1\times 10^{-12}$.}
	\label{tab:erank3}
\end{table}%

\begin{table}[htbp]
	\begin{center}
		\begin{tabular}{|c|c|c|}
			\hline
			spatial $N$ on each direction  &  Example 1 & Example 2 \\
			\hline
			$2^4$  &8.28 & 9.04 \\	
			\hline
			 $2^5$ &9.63 &  12.96 \\	
			 \hline
			 $2^6$ & 12.94&    17.46\\
			 \hline
			 $2^7$ &17.28&   21.88\\
			 \hline
			 $2^8$  &23.47  &   28.17\\	
			\hline
		\end{tabular}%
	\end{center}
	\caption{Effective QTT-rank of the assembled operator $(\tau \textbf{A}+\textbf{I})^{\frac{\Delta T}{\tau}}$ with a given TT-rounding precision $5\times 10^{-4}$ in Example 1 and $5\times 10^{-5}$ in Example 2.}
	\label{tab:erank4}
\end{table}%
From the results in Tables \eqref{tab:erank1}-\eqref{tab:erank2} and \eqref{tab:erank3}-\eqref{tab:erank4}, we find that the QTT-ranks increase very slowly when $N$ increases. Hence, by extracting low-dimensional structures in the solution space, the QTT method helps us alleviate the curse of dimensionality to a certain extent. 

\subsection{Comparison with existing methods}
\noindent
To compute the reference solution, we respectively solve Eqns.\eqref{NonlinearSignalModel_Experiment1} and \eqref{NonlinearSignalModel_Experiment2} using Euler-Maruyama scheme \cite{Platen:1992} with a fine time step, which generates two sequences $X_{t_i}$ and $Y_{t_i}$ of length $dt=0.001$ as discrete real states at time $t_i=idt$, $i=1,...,20000$. We feed the observation $Y_{t_j}$ into the online procedure at each observation time $t_j=j\Delta T$, i.e. only a subsequence $Y_{t_j}$ of $Y_{t_i}$ is regarded as observation sequence and utilized.

To solve the NLF problem in a real time manner, one need to solve the path-wise robust DMZ equation associated with Eqns.\eqref{NonlinearSignalModel_Experiment1} and \eqref{NonlinearSignalModel_Experiment2}. As such, one can solve the FKEs associated with Eqns.\eqref{NonlinearSignalModel_Experiment1} and \eqref{NonlinearSignalModel_Experiment2} using a FD method. However, the FD method becomes expensive when the dimension of the NLF problem increases. We shall show that the QTT method provides considerable savings over the FD method. 

For the FKE associated with the almost linear problem, we restrict the FKE on the domain $[-5,5]^3$ and 
discretize the domain into $2^6$ grids on each dimension. Thus the total degree of freedom is $2^{18}$. The initial distribution is assumed to be a Gaussian function, where the corresponding unnormalized conditional density function is $\sigma_0(\vec{x})=\exp(-4|\vec{x}|^2)$. The time step is chosen to be $\tau=\frac{\Delta T}{100}$ so that the CFL stability condition is satisfied. Based on the initial discretization of $\sigma_0(\vec{x})$, we implement the QTT method to solve the FKE simultaneously, where we fix the TT-rounding precision to be $\epsilon=5\times 10^{-4}$.

Since $\reallywidehat{U}_{\textbf{l},j}^{\frac{\Delta T}{\tau}}$ denotes the predicted solution of the QTT method at different time, which approximates the unnormalized conditional density of the state $\vec{x}(t_j)=[x_1(t_j),x_2(t_j),x_3(t_j)]^T$. Thus, we can compute the estimation of the state in 
three coordinates separately. Specifically, we first compute the Hadamard product of $\reallywidehat{U}_{\textbf{l},j}^{\frac{\Delta T}{\tau}}$ and the QTT-format of each coordinate $x_i$, $i=1,2,3$. 
Then, we compute its total sum and divide it by the total sum of $\reallywidehat{U}_{\textbf{l},j}^{\frac{\Delta T}{\tau}}$. The results corresponding the FD method can be computed similarly. 

In Fig.\ref{fig:CompareTrajectoryLinear}, we show the state estimation results of the almost linear problem in three coordinates separately. The CPU time of the FD method is $2052s$, but the QTT method only requires $15.19s$. A significant computational saving is achieved by our method because most of the operations in the QTT method only logarithmically depend on the total degree of freedom and polynomially depend on the QTT-ranks, which are relatively small; see Tables \eqref{tab:erank1}--\eqref{tab:erank4}. The CPU time of the PF method is $17.36$. The efficiency of the PF method is closely related to the number of particles. In this example, we use 3000 particles to avoid explosion in the tracking, which might happen frequently if only 2000 particles are used. In Fig.\ref{fig:almost_linear_pdf}, we show the profile of the density function at time $t=10$.

	
\begin{figure}[tbph] 
	\begin{subfigure}[b]{0.32\textwidth}
		\includegraphics[width=1.1\linewidth]{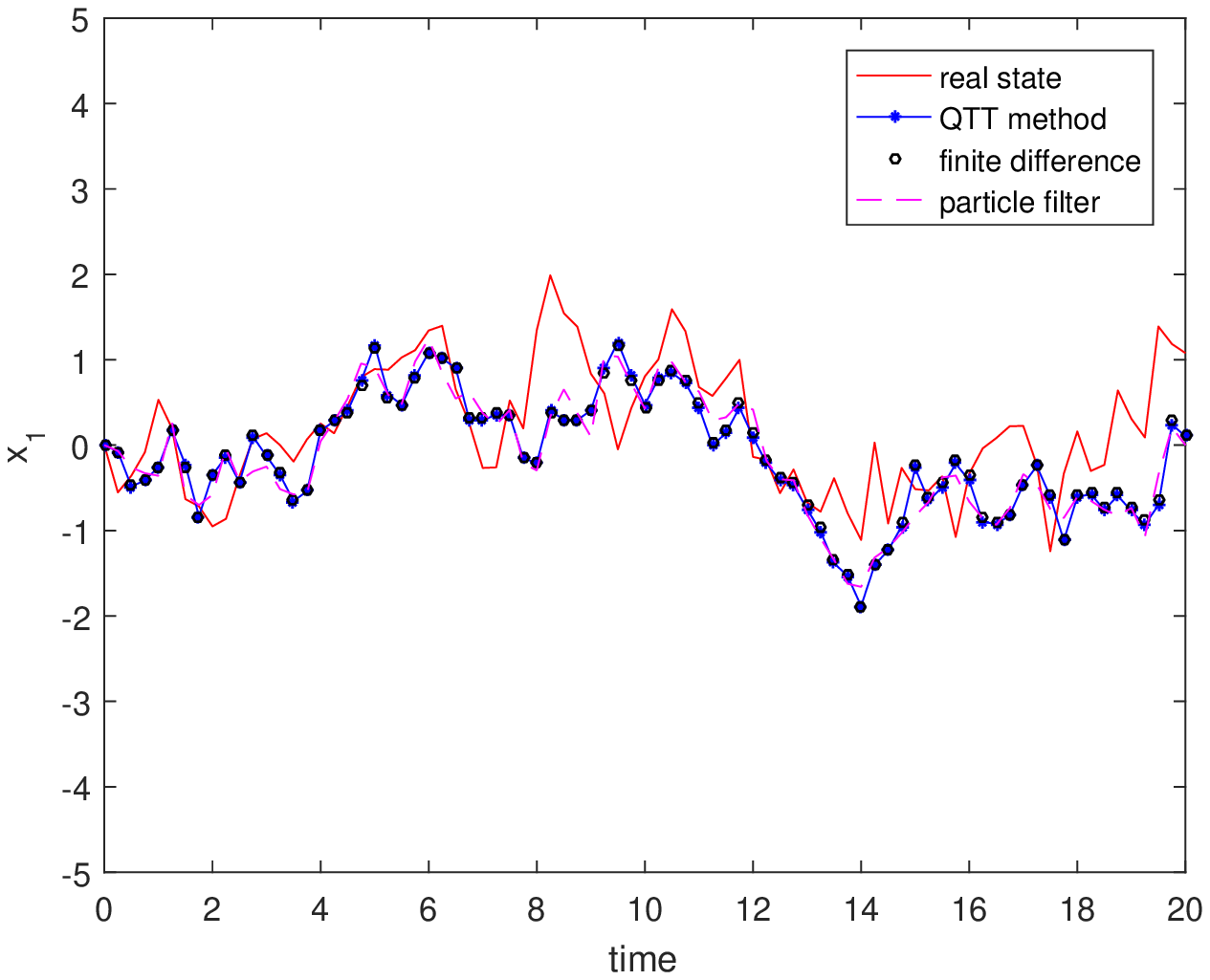} 
		\caption{ $x_1$ component.}
	\end{subfigure}
	\begin{subfigure}[b]{0.32\textwidth}
		\includegraphics[width=1.1\linewidth]{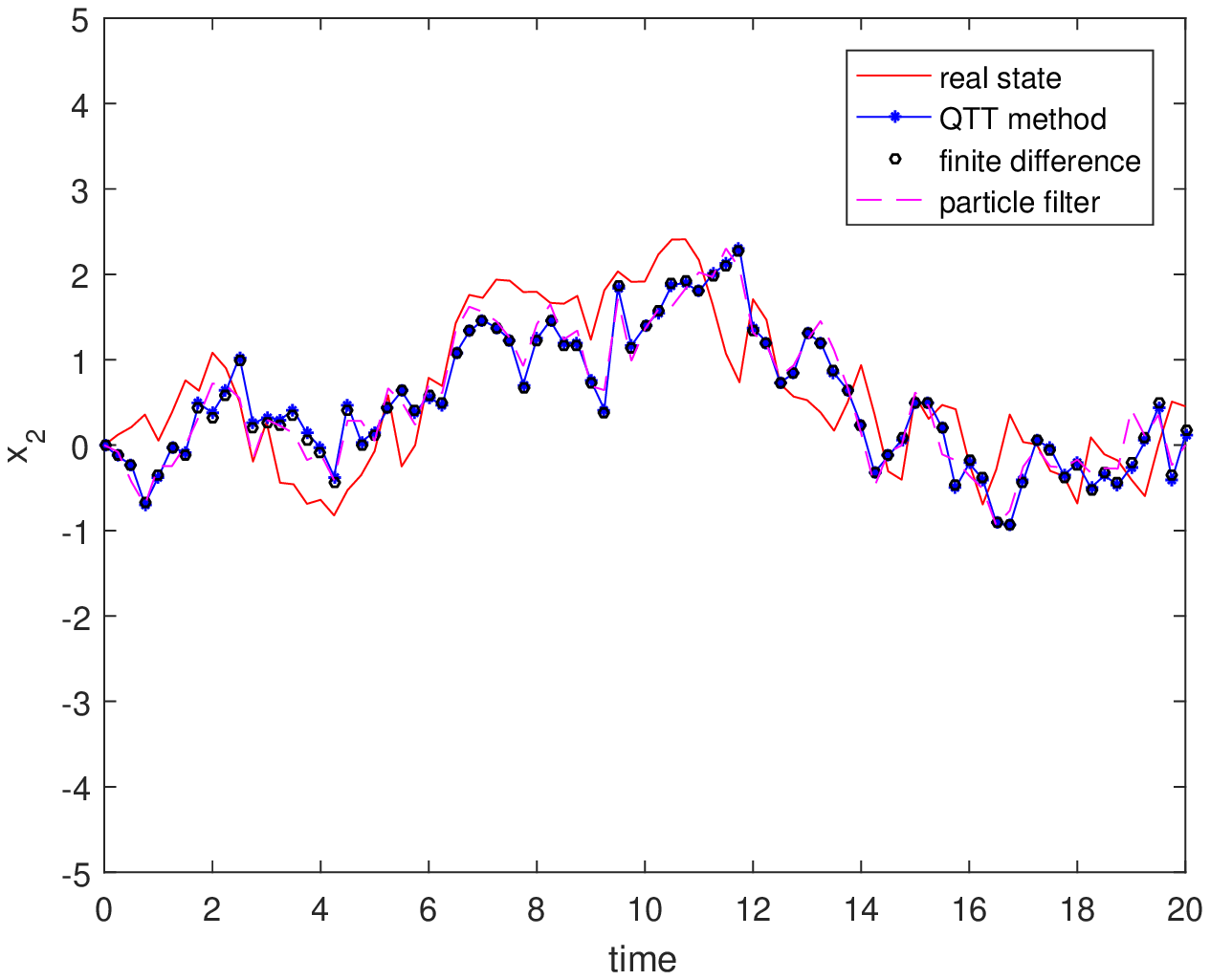} 
		\caption{ $x_2$ component.} 
	\end{subfigure}
	\begin{subfigure}[b]{0.32\textwidth}
		\includegraphics[width=1.1\linewidth]{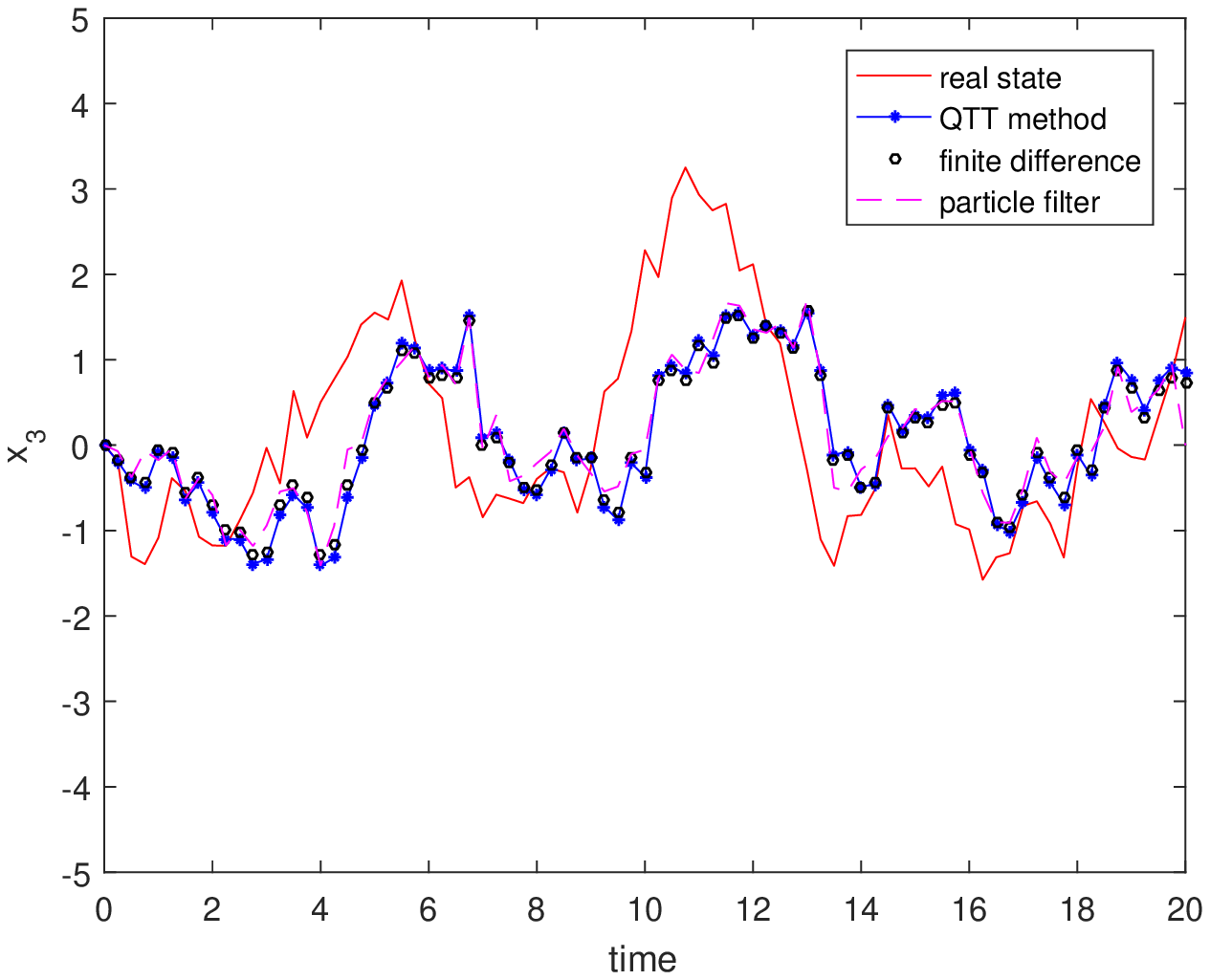} 
		\caption{ $x_3$ component.}
	\end{subfigure}
	\caption{Comparison of a trajectory of the almost linear problem obtained by using different methods.}
	\label{fig:CompareTrajectoryLinear}
\end{figure} 

\begin{figure}[htbp]
\centering
\includegraphics[width=0.55\linewidth]{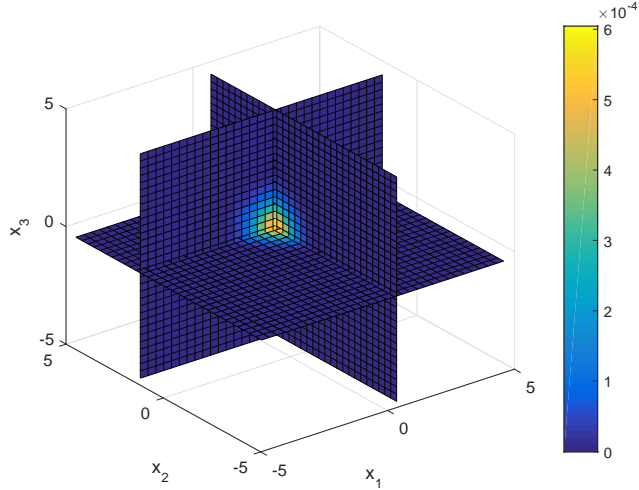}
\caption{The estimations of density function at $t=10s$ in the almost linear problem.}
\label{fig:almost_linear_pdf}
\end{figure} 

For the FKE associated with the cubic sensor problem, we restrict the FKE on the domain $[-3,3]^3$ and discretize the domain into $2^6$ grids on each dimension. Thus, the total degree of freedom is $2^{18}$. The unnormalized density function of the initial state is assumed to be $\sigma_0(\vec{x})=\exp(-10(x_1^4+x_2^4+x_3^4))$. The time step is chosen to be $\tau=\frac{\Delta T}{200}$, which is smaller than the first example, in order to satisfy the CFL stability condition. We fix a higher TT-rounding precision $\epsilon=5\times 10^{-5}$ due to the higher nonlinearity in this example.

In Fig. \ref{fig:CompareTrajectoryCubic}, we show the estimation results of the cubic sensor problem in three coordinates separately. The CPU time of the FD method is $4079s$, while the QTT method is only $17.11s$. The time cost of offline computing of the QTT method is $17.35s$. Even though the QTT-format tensor constructor has linear dependence on the degree of freedom (see Prop.\ref{qtt_online_complexity}), it takes up a minor part of the total computational time. The CPU time of the PF is $29.45s$. In this example, we use 5000 particles to avoid explosion in the tracking. In Fig.\ref{fig:cubic_pdf}, we show the profile of the density function at time $t=12s$.

We repeat the experiment for $N_{path}=100$ times and record the mean square errors (MSEs) averaged over $100$ sample paths. We find that the MSEs between the QTT solution and the FD solution are 0.007 in Example 1 and 0.023 in Example 2, respectively, which show that the FD discretization of the unnormalized density function has a low-rank structure and such structure is approximated well in the QTT-format. Therefore, the QTT method gives a very accurate approximation to the FD solution
with considerable savings.

\begin{figure}[tbph] 
	\begin{subfigure}[b]{0.32\textwidth}
		\includegraphics[width=1.1\linewidth]{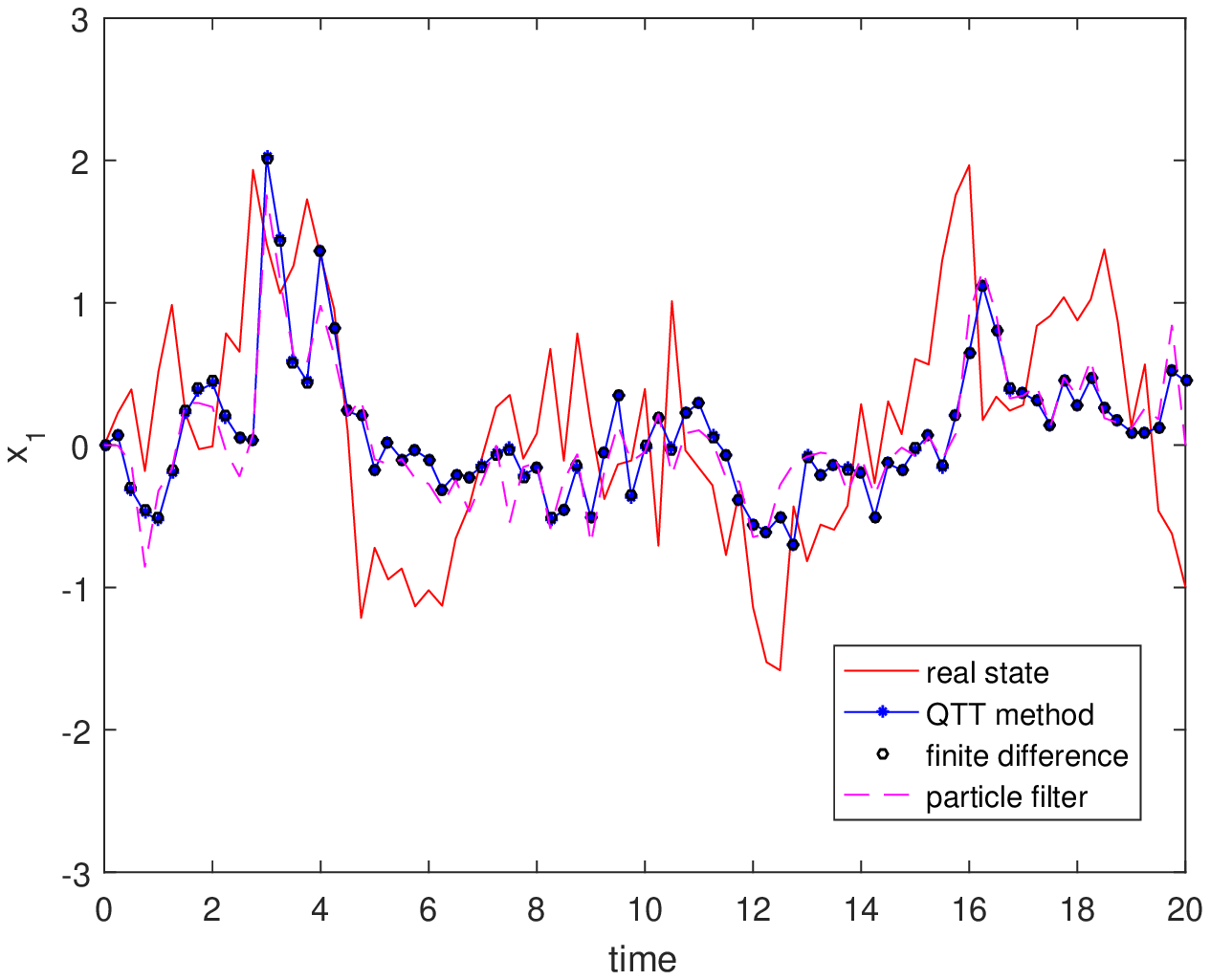} 
		\caption{ $x_1$ component.}
	\end{subfigure}
	\begin{subfigure}[b]{0.32\textwidth}
		\includegraphics[width=1.1\linewidth]{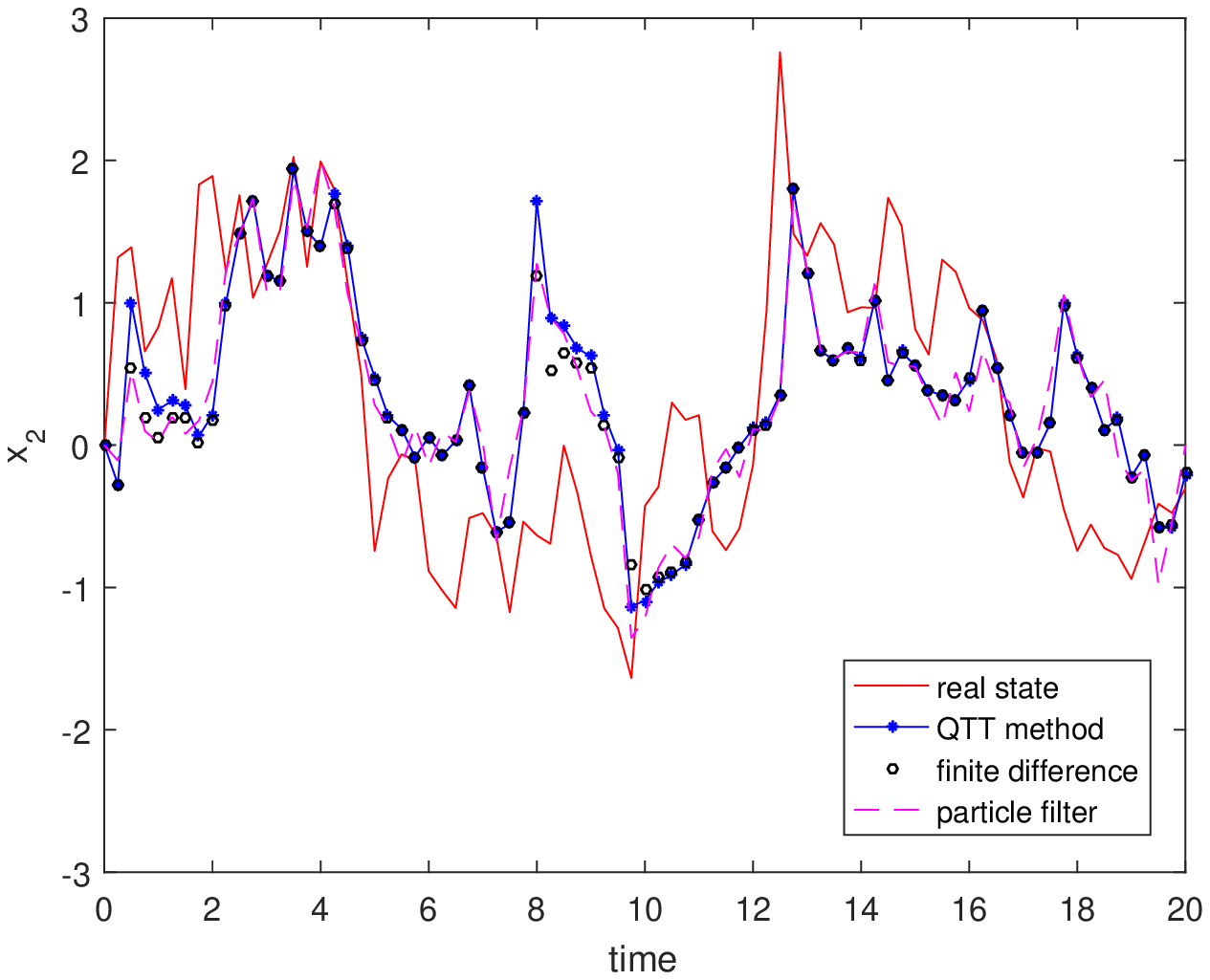} 
		\caption{ $x_2$ component.} 
	\end{subfigure}
	\begin{subfigure}[b]{0.32\textwidth}
		\includegraphics[width=1.1\linewidth]{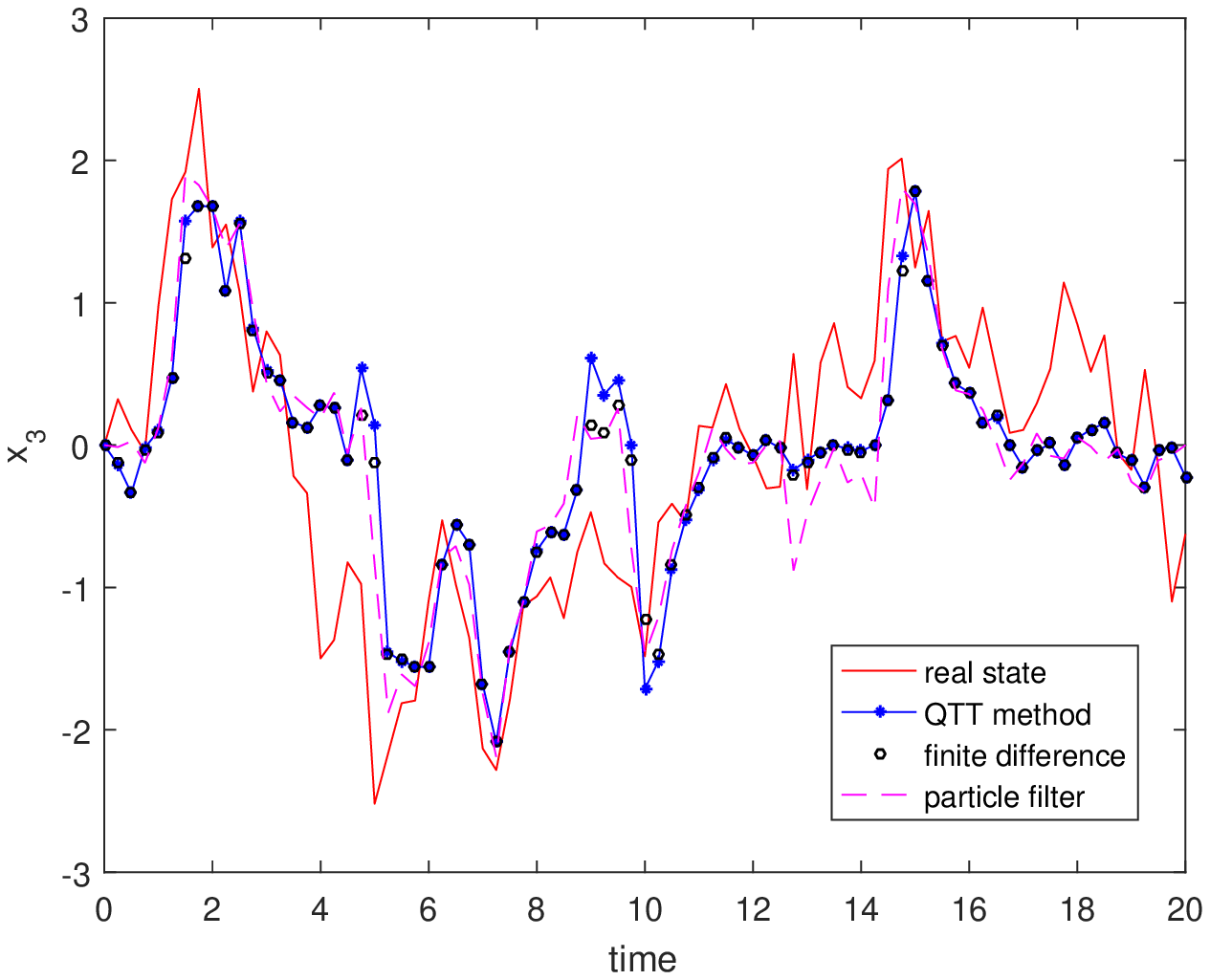} 
		\caption{ $x_3$ component.}
	\end{subfigure}
	\caption{Comparison of a trajectory of the cubic sensor problem obtained using different methods.}
	\label{fig:CompareTrajectoryCubic}
\end{figure} 
\begin{figure}[htbp]
	\centering
	\includegraphics[width=0.55\linewidth]{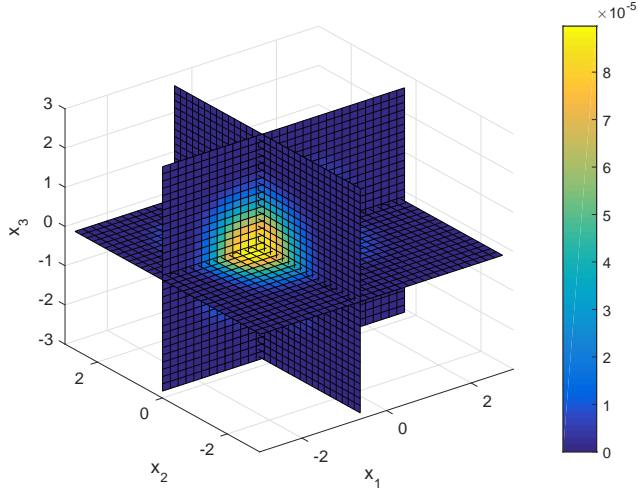}
	\caption{The estimations of density function at $t=12s$ in the cubic sensor problem}
	\label{fig:cubic_pdf}
\end{figure}
\begin{remark}
	The PF method is a very popular method in solving NLF problem, which is a Monte Carlo method and it requires a certain amount of sample to compute statistical quantities. Since the PF method and our method are based on totally different methodologies, we cannot reach a general conclusion about their performances for NLF problems.
\end{remark}
In Fig.\ref{fig:TestConvergence}, we show the convergence of the QTT method with respect to the TT-rounding precision at discrete time points. We use the same settings in the spatial and temporal discretization that were used in the 
experiments before and only change the TT-rounding precision. We compare the estimated states obtained by the QTT method and FD method, which provides the reference solution. One can see that when we decrease the TT-rounding precision, the relative error of the QTT solution decreases accordingly, which agrees with our convergence analysis. In practice, we choose the TT-rounding precision in such a way that we can balance 
the accuracy and computational cost.   
\begin{figure}[tbph] 
	\begin{subfigure}[b]{0.49\textwidth}
		\includegraphics[width=1.1\linewidth]{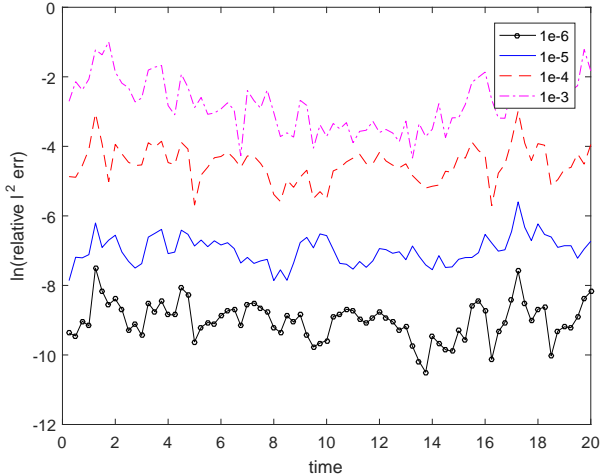} 
		\caption{The almost linear problem.}
	\end{subfigure}
	\begin{subfigure}[b]{0.49\textwidth}
		\includegraphics[width=1.1\linewidth]{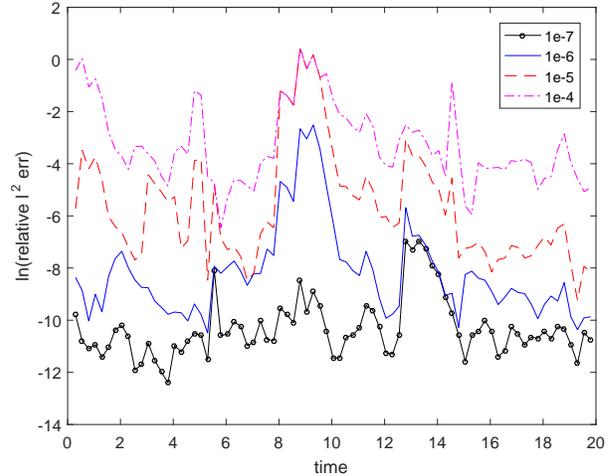} 
		\caption{The cubic sensor problem.} 
	\end{subfigure}
	\caption{Error of estimated states between the QTT method and reference method under different TT-rounding precision.}
	\label{fig:TestConvergence}
\end{figure} 
\subsection{Verification of the computational complexity}\label{sec:verify-cost}
\noindent
In this subsection, we intend to verify the computational complexity studied in the Prop. \ref{qtt_online_complexity}. Notice that the main computational load in the online procedure consists of 
two parts. The first part is $\mathcal{O}(d\log_{2}(N)r^6)$, which comes from the 
QTT operations in solving the FKEs. It polynomially depends on QTT-ranks and logarithmically depends on the degree of freedom. Hence, the QTT method brings in significant savings for high-dimensional problems that have a low-dimensional approximation. The second part of the computational load is $\mathcal{O}(N^dr^2)$, which comes from 
assimilating the observation data into the QTT solution and depends linearly on the degree of freedom 
in the spatial discretization. We find that these two parts are comparable in the 3D numerical experiments that 
were studied in this paper.

Let $t_{FKE}$ denote the CPU time in solving the FKE and $t_{EXP}$ is the CPU time in computing 
the exponential transformation, i.e., assimilating the observation data, respectively. 
In Table \ref{tab:onlinecomplexity}, we show the computational time of the QTT method and FD method in solving the Example 1, respectively. The QTT-rank is the averaged effective QTT-rank of the FKE solution $\widetilde{u}$ at every time.
We find that the growth of computational complexity of the QTT method is significantly slower than that of the FD method. 
This numerical experiment shows that the QTT method can solving high-dimensional NLF problems in a real time manner, while the FD method is too expensive. Notice that in Table \ref{tab:onlinecomplexity}, when $N=2^7$ the FD method would cost about 13 hours of computation, which becomes extremely expensive to do this experiment.  

\begin{table}[htbp]
	\begin{center}
		\begin{tabular}{|c|ccc|cc|}
			\hline
			  & \multicolumn{3}{c|}{QTT method}& \multicolumn{2}{c|}{Finite difference}  \\
			\hline
		        $N$ & $t_{FKE}$  & $t_{EXP}$ & QTT-rank  & $t_{FKE}$  & $t_{EXP}$\\
			\hline
			$2^4$ &   1.87 &0.85& 6.83 &9.62&  0.03\\
			\hline
			 $2^5$   &4.15 &  1.36& 7.88&169 & 0.08\\	
			 \hline
			 $2^6$  &11.18 &   5.20& 8.78 &2802&  0.65\\
			 \hline
			 $2^7$ &   27.49  &46.28 & 8.89 & 47634 & 5.33 \\
			\hline
		\end{tabular}%
	\end{center}
	\caption{CPU time (sec.) of the QTT method and finite difference method. $N$ is the grid number in each dimension. }
	\label{tab:onlinecomplexity}
\end{table}%



\section{Conclusions}\label{sec:conclusion}
\noindent
In this paper, we develop an efficient numerical method to solve high-dimensional nonlinear filtering (NLF) problems. Specifically, we use the tensor train decomposition method to solve the forward Kolmogorov equation (FKE) arising from the NLF problem. Our method consists of offline and online stages. In the offline stage, we use the finite difference method to discretize the partial differential operators involved in the FKE and extract low-dimensional structures in the solution space using the tensor train decomposition method. In addition, we approximate the evolution of the FKE operator using the tensor train decomposition method. With the pre-computed and saved low-rank approximation tensors, we achieve fast computing in the online stage given new observation data. Under some mild assumptions, we provide convergence analysis for the proposed method. Our analysis result reveals different sources of errors and provides some guidance on the implementation of our method so that the error is controllable. Finally, we present numerical results to verify the efficiency and accuracy of the proposed method in solving 3D NLF problems. Numerical results show that the solutions of the FKEs indeed have certain low-dimensional structures. By using the tensor train decomposition method to extract the low-dimensional structures in the solution space of the FKE, we succeed in solving the high-dimensional NLF problems in a real-time manner.

There are two directions we want to explore in our future work. First, we are interested in developing  efficient numerical methods for high-dimensional NLF problems (with $d>3$), which will be reported in our subsequent work. In addition, we will develop numerical methods to solve NLF problems, where the drift and observation functions are time-dependent. This type of problem is more difficult since the potential low-dimensional structures in the solution space may vary with respect to time. One may need to develop some dynamically low-dimensional approximation methods to 
address this issue; see e.g. \cite{ChengHouZhang1:13,ChengHouZhang2:13}.

\section{Acknowledgements}
\noindent
The research of S. Li is partially supported by the Doris Chen Postgraduate Scholarship. The research of Z. Wang is partially supported by the Hong Kong PhD Fellowship Scheme. 
The research of S. S.-T. Yau was supported by the National Natural Science Foundation of China (11471184), Tsinghua University Education Foundation fund (042202008), and a start-up fund from Tsinghua University.  The research of Z. Zhang is supported by the Hong Kong RGC grants (Projects 27300616, 17300817, and 17300318), National Natural Science Foundation of China (Project 11601457), Seed Funding Programme for Basic Research (HKU), and Basic Research Programme (JCYJ20180307151603959) of The Science, Technology and Innovation Commission of Shenzhen Municipality.

\appendix

\bibliographystyle{siam}

\end{document}